\documentclass[a4paper,10pt]{amsart}
\usepackage{amsmath}
\usepackage{cases}

\UseRawInputEncoding

\usepackage{amsfonts}
\usepackage[colorlinks,linkcolor=blue,citecolor=blue]{hyperref}
\usepackage{latexsym, amssymb, amsmath, amsthm, bbm}
\usepackage[all]{xy}
\usepackage{pgfplots}
\usepackage{enumerate}

\DeclareSymbolFont{EulerExtension}{U}{euex}{m}{n}
\DeclareMathSymbol{\euintop}{\mathop} {EulerExtension}{"52}
\DeclareMathSymbol{\euointop}{\mathop} {EulerExtension}{"48}

\allowdisplaybreaks[4]

\setlength{\textwidth}{5.6truein}
\setlength{\textheight}{8.2truein}
\setlength{\topmargin}{-0.13truein}
\setlength{\parindent}{0pt}
\addtolength{\parskip}{5pt}

\def \id{\operatorname{id}}

\def \ord{\operatorname{ord}}

\def \C{\mathcal{C}}

\def \Z{\mathbb{Z}}

\def \k{\Bbbk}

\def \dim{\operatorname{dim}}

\def \Hom{\operatorname{Hom}}

\def \T{\operatorname{T}}

\def \ord{\operatorname{ord}}

\def \C{\mathcal{C}}

\def \X{\mathcal{X}}

\def \S{\mathcal{S}}
\def \Z{\mathbb{Z}}

\def \T{\mathrm{T}}

\def \End{\operatorname{End}}

\def \lcm{\operatorname{lcm}}

\numberwithin{equation}{section}

\newtheorem{theorem}{Theorem}[section]
\newtheorem{lemma}[theorem]{Lemma}
\newtheorem{proposition}[theorem]{Proposition}
\newtheorem{corollary}[theorem]{Corollary}
\newtheorem{definition}[theorem]{Definition}

\newtheorem{remark}[theorem]{Remark}

\newtheorem{notation}[theorem]{Notation}%[section]

\begin{document}
\title[Note on Invariance and Finiteness for the Exponent of Hopf algebras]{Note on Invariance and Finiteness for the Exponent of Hopf algebras}

\thanks{2020 \textit{Mathematics Subject Classification}. 16W30}
%\subjclass[2020]{16W30}
\keywords{Hopf algebra, Exponent, Gauge invariant}

\author[K. Li]{Kangqiao Li}
\address{Department of Mathematics, Nanjing University, Nanjing 210093, China}
\email{kqli@nju.edu.cn}

\date{}

\begin{abstract}
There are two notions of exponent of finite-dimensional Hopf algebras introduced and studied in the literature. In this note, we discuss and compare their properties including invariance and finiteness in this note.
Specifically, one notion is invariant under twisting and taking the Drinfeld double, just like the other one. We also find that if the non-cosemisimplicity and dual Chevalley property hold, both exponents are infinite in characteristic $0$ but finite in positive characteristic.
\end{abstract}

\maketitle

\section{Introduction}

The notions of exponent of Hopf algebras have been introduced and studied since 1999. This process arised from a conjecture by Kashina \cite{Kas99,Kas00} that $n$th Sweedler power $[n]$ is trivial on a semisimple and cosemisimple Hopf algebra of dimension $n$. %In fact, the question is closely connected with a problem on the antipode of Hopf algebras in the Yetter-Drinfeld category.
She also verified this property on a number of known examples. One notion of exponents for a Hopf algebra $H$ is considered to be the least positive integer $n$ such that $[n]$ is trivial, which is denoted by $\exp_0(H)$ in this note. Actually it coincides with the exponent of a group $G$ when $H$ is the group algebra $\k G$.

However, the term ``exponent'' of Hopf algebras was firstly introduced by Etingof and Gelaki \cite{EG99}. Their definition of the exponent, denoted by $\exp(H)$, is slightly different from $\exp_0(H)$ above. They provided at first various properties for $\exp(H)$ when $H$ is finite-dimensional, especially the invariance properties. Specifically, this exponent is invariant under the duality, taking the opposite algebra, twisting and taking the Drinfeld double. One more important result in \cite{EG99} is that $\exp(H)$ is finite and divides $\dim(H)^3$, as long as $H$ is semisimple and cosemisimple (and thus involutory). This partially answers Kashina's conjecture, because of an immediate observation that $\exp(H)=\exp_0(H)$ holds when $H$ is involutory (or pivotal). Another result about the finiteness is that $\exp(H)<\infty$ in positive characteristic when $H$ is finite-dimensional.

On the other hand, $\exp_0(H)$ seems different from $\exp(H)$ for general Hopf algebras by definitions. One might ask whether $\exp_0(H)$ has similar properties with $\exp(H)$. Some properties are studied by Landers, Montgomery and Schauenburg \cite{LMS06}. For instance, they showed that $\exp_0(H)$ is also invariant under taking the opposite algebra. As for the finiteness, the author and Zhu \cite{LZ19} found that $\exp_0(H)=\infty$ if $H$ is non-cosemisimple in characteristic $0$ with the dual Chevalley property, and that $\exp_0(H)<\infty$ if $H$ is finite-dimensional and pointed. Moreover, there are other researches involving exponents of Hopf algebras, such as \cite{EG02}, \cite{KSZ06}, \cite{MVW16} and \cite{SV17}, etc.

This note is an attempt to complete comparisons of properties between $\exp_0(H)$ and $\exp(H)$ when $H$ is finite-dimensional. By our results, it might be suggested that both exponents have almost the same properties and even the same values. Our first result is that $\exp_0(H)$ is also invariant under twisting and taking the Drinfeld double, which is shown in Section 2. In order to compare the finiteness properties, we compute to obtain a formula on the exponent of the pivotal semidirect product $H\rtimes\k\langle S^2\rangle$ in Section 3, stating that $\exp(H\rtimes\k\langle S^2\rangle)=\lcm(\exp(H),\exp(H_0))$. Finally in Section 4, we show that if $H$ is non-cosemisimple with the dual Chevalley property, both of its exponents are infinite in characteristic $0$ but finite in positive characteristic.

\section{Exponents and Their Invariance}\label{Section2}

We start by recalling the definitions and basic properties of exponents. Let $(H,m,u,\Delta,\varepsilon)$ be a Hopf algebra with bijective antipode $S$ over a field $\Bbbk$. Sweedler notation $\Delta(h)=\sum h_{(1)}\otimes h_{(2)}$ for $h\in H$ is always used. We also denote following $\Bbbk$-linear maps for convenience:
\begin{eqnarray*}
m_n: & H^{\otimes n}\rightarrow H, & h_1\otimes h_2\otimes \cdots \otimes h_n \mapsto h_1 h_2\cdots h_n, \\
\Delta_n: & H\rightarrow H^{\otimes n}, & h\mapsto \sum h_{(1)}\otimes h_{(2)}\otimes\cdots\otimes h_{(n)}
\end{eqnarray*}
and the \textit{Sweedler power} $[n]:=m_n\circ\Delta_n$ for each positive integer $n$. The two notions of \textit{exponent} (\cite{Kas99,Kas00} and \cite{EG99}) of $H$ are defined respectively as:
\begin{eqnarray*}
\exp(H) &:=& \min\{n\geq 1\mid m_n\circ (\id\otimes S^{-2}\otimes\cdots\otimes S^{-2n+2})\circ \Delta_n=u\circ\varepsilon \},  \\
\exp_0(H) &:=& \min\{n\geq 1\mid [n]:=m_n\circ \Delta_n=u\circ\varepsilon \}.
\end{eqnarray*}
Moreover in this note, we always make following conventions to cover infinite cases:
\begin{itemize}
  \item $\min \varnothing = \infty$;
  \item Each positive integer divides $\infty$, and $\infty$ divides $\infty$;
  \item Any positive integer (or $\infty$) divided by $\infty$ must be $\infty$ itself.
\end{itemize}
It is immediate that whenever finite and infinite, $\exp_0(H)=\exp(H)$ when $H$ is involutory.

Now we list invariance properties for $\exp(H)$ below, which are introduced in \cite{EG99}. Recall that an element $J\in H\otimes H$ is called a \textit{left twist} for $H$ (\cite{Dri86}), if $J$ is invertible satisfying
$$(J\otimes 1)\cdot (\Delta\otimes\id)(J)=(1\otimes J)\cdot (\id\otimes\Delta)(J),$$
A Hopf algebra $(H^J,m,u,\Delta^J,\varepsilon)$ with antipode $S^J$ could be constructed afterwards. Besides, the Drinfeld double of $H$ is denoted by $D(H)$.

\begin{lemma}\label{lem:expbasic}(\cite{EG99})
Let $H$ be a finite-dimensional Hopf algebra. Then
\begin{itemize}
  \item[(1)] If $\exp(H)<\infty$, then $m_n\circ(\id\otimes S^{-2}\otimes\cdots\otimes S^{-2n+2})\circ\Delta_n=u\circ\varepsilon$ if and only if $\exp(H)\mid n$;
  \item[(2)] $\exp(H^\ast)=\exp(H)$;
  \item[(3)] $\exp(H^\mathrm{op})=\exp(H^\mathrm{cop})=\exp(H)$;
  \item[(4)] If $H^\prime$ is a Hopf subalgebra or quotient of $H$, then $\exp(H^\prime)\mid\exp(H)$;
  \item[(5)] Let $H^\prime$ be another Hopf algebra. Then $\exp(H\otimes H^\prime)=\lcm(\exp(H),\exp(H^\prime))$;
  \item[(6)] Let $J$ be a (left or right) twist for $H$. Then $\exp(H^J)=\exp(H)$;
  \item[(7)] $\exp(D(H))=\exp(H)$.
\end{itemize}
\end{lemma}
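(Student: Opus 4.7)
The plan is to work throughout in the convolution algebra $\End(H)$, writing
\[T_n \;:=\; m_n\circ(\id\otimes S^{-2}\otimes\cdots\otimes S^{-2n+2})\circ\Delta_n \;=\; \id_H \ast S^{-2} \ast S^{-4}\ast\cdots\ast S^{-2(n-1)},\]
where $\ast$ denotes convolution, so that $\exp(H)=\min\{n\geq 1\mid T_n=u\circ\e\}$ and the defining condition says precisely that $T_n$ equals the convolution unit.

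For (1), the decisive input is that $S^{-2}$ is a Hopf algebra automorphism, so $S^{-2k}\circ(f\ast g)=(S^{-2k}\circ f)\ast(S^{-2k}\circ g)$; telescoping gives the key identity $T_{m+n}=T_m\ast(S^{-2m}\circ T_n)$. Assuming $T_m=u\e$, this collapses to $T_{m+n}=S^{-2m}\circ T_n$, and since $S^{-2m}\circ u\e=u\e$, we conclude $T_{m+n}=u\e$ iff $T_n=u\e$. Thus the set $\{n\geq 0\mid T_n=u\e\}$ is closed under sums and under non-negative differences, so it equals $\exp(H)\cdot\Z_{\geq 0}$.

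Items (2)--(5) reduce to routine verifications. (2) dualizes $T_n$ under $\End(H)^\ast\cong\End(H^\ast)$, using $(S_H)^\ast=S_{H^\ast}$, $(m_n)^\ast=\Delta_n^{H^\ast}$ and $(\Delta_n)^\ast=m_n^{H^\ast}$. (3) uses $S_{H^{\mathrm{op}}}=S_{H^{\mathrm{cop}}}=S^{-1}$ (hence $(S_{H^{\mathrm{op}}})^{-2}=S^2$) and reorders the convolution factors of $T_n^{H^{\mathrm{op}}}$ and $T_n^{H^{\mathrm{cop}}}$ to match $T_n^H$. (4) just restricts $T_n^H$ to a Hopf subalgebra or descends it along a Hopf quotient. (5) is immediate from $T_n^{H\otimes H'}=T_n^H\otimes T_n^{H'}$ under the standard identification.

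The main obstacle is (6). Using $\Delta^J(h)=J\Delta(h)J^{-1}$ together with the explicit twisted-antipode formula $S^J(h)=Q_J\, S(h)\, Q_J^{-1}$, where $Q_J:=m((S\otimes\id)(J))$, one expands $T_n^{H^J}$ iteratively and absorbs the twist elements through telescoping conjugations; after careful bookkeeping of how $S^{-2}$ interacts with the factors of $J$, one arrives at $T_n^{H^J}=A_n^{-1}\,T_n^H\,A_n$ for an invertible $A_n\in H$, so the two operators vanish on exactly the same set of $n$. Finally, (7) is bootstrapped from what precedes: the Hopf embeddings $H\hookrightarrow D(H)\hookleftarrow H^{\ast\mathrm{cop}}$ force $\exp(H),\exp(H^{\ast\mathrm{cop}})\mid\exp(D(H))$ by (4), and using the universal $R$-matrix of $D(H)$ to rewrite $T_n^{D(H)}$ in terms of the corresponding operators on $H$ and $H^{\ast\mathrm{cop}}$ yields the reverse divisibility, which together with (2)--(3) gives $\exp(D(H))=\lcm(\exp(H),\exp(H^{\ast\mathrm{cop}}))=\exp(H)$.
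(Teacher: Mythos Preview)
The paper does not itself prove this lemma; the Remark following it merely records where each item appears in \cite{EG99} (Proposition~2.2 for (1), (2), (4), (5); Corollary~2.6 for (3); Theorem~3.3 and Corollary~3.4 for (6) and (7)). So the benchmark is the original Etingof--Gelaki argument.

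Your treatment of (1)--(5) is correct and essentially the standard one. The telescoping identity $T_{m+n}=T_m\ast(S^{-2m}\circ T_n)$ is exactly the engine behind (1), and (2)--(5) are routine verifications as you say.

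The gap is in (6). You assert that the twist data collapse into a single conjugation $T_n^{H^J}=A_n^{-1}\,T_n^H\,A_n$, but this is the entire content of the claim and you have not exhibited it: $\Delta_n^J$ inserts copies of $J$ and $J^{-1}$ between every adjacent pair of tensor factors, and each $(S^J)^{-2k}$ is itself conjugation by an element built from $Q_J$ and its images under powers of $S$, so there is a large web of inner factors whose cancellation is far from automatic. ``Careful bookkeeping'' is a promissory note, not a proof. Etingof and Gelaki do \emph{not} argue this way. Their route passes through the Drinfeld element: for a quasitriangular Hopf algebra $(A,R)$ with Drinfeld element $u$ they identify $\exp(A)$ with the multiplicative order of $u$, and then observe that under a twist $u$ is replaced by an element of the same order. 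Item (7) is then a corollary, obtained---just as the present paper does for $\exp_0$ in Proposition~\ref{prop:exp0basic2}(7)---from the presentation of $D(H)$ as a $2$-cocycle deformation of $H^{\ast\mathrm{cop}}\otimes H$ together with (2), (3), (5), (6). Your outline for (7) via the $R$-matrix of $D(H)$ is in the right spirit, but the reverse divisibility $\exp(D(H))\mid\exp(H)$ is precisely where the Drinfeld-element identity does the real work, and you have not supplied it either.
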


\begin{remark}
Items (1), (2), (4) and (5) are contained in \cite[Proposition 2.2]{EG99}. Item (3) is \cite[Corollaray 2.6]{EG99}, while (6) and (7) are respectively \cite[Theorem 3.3 and Corollary 3.4]{EG99}.
\end{remark}

\begin{lemma}\label{lem:exp0basic}(\cite{Kas99} and \cite{LMS06})
Let $H$ be a finite-dimensional Hopf algebra. Then
\begin{itemize}
  \item[(1)] If $\exp_0(H)<\infty$, then $[n]:=m_n\circ\Delta_n=u\circ\varepsilon$ if and only if $\exp_0(H)\mid n$;
  \item[(2)] $\exp_0(H^\ast)=\exp_0(H)$;
  \item[(3)] $\exp_0(H^\mathrm{op})=\exp_0(H^\mathrm{cop})=\exp_0(H)$;
  \item[(4)] If $H^\prime$ is a Hopf subalgebra or quotient of $H$, then $\exp_0(H^\prime)\mid\exp_0(H)$;
  \item[(5)] Let $H^\prime$ be another Hopf algebra. Then $\exp_0(H\otimes H^\prime)=\lcm(\exp_0(H),\exp_0(H^\prime))$.
\end{itemize}
\end{lemma}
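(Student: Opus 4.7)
The plan is to base everything on the convolution-algebra identity $[n] = \id^{*n}$ in $(\End_\k(H),*)$, where $*$ denotes convolution and $u\varepsilon$ is its unit; since $\id * S = u\varepsilon$, the antipode $S$ is the convolution inverse of $\id$, so $[-n]_H := S^{*n}$ is the convolution inverse of $[n]_H$. The identity $[m+n] = [m] * [n]$ then gives item (1) immediately: setting $n_0 := \exp_0(H) < \infty$, for any $k \geq 0$ and $0 \leq r < n_0$ one has $[kn_0+r] = [n_0]^{*k} * [r] = [r]$, so the set of $n \geq 1$ with $[n] = u\varepsilon$ is exactly the set of positive multiples of $n_0$.

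For item (2), dualization interchanges $m$ and $\Delta$, so $[n]_{H^*} = ([n]_H)^*$ as transposes and $(u_H\varepsilon_H)^* = u_{H^*}\varepsilon_{H^*}$; injectivity of transposition then gives the equivalence. For item (4), a Hopf subalgebra inclusion $\iota:H'\hookrightarrow H$ satisfies $\iota\circ[n]_{H'} = [n]_H\circ\iota$, and a Hopf surjection $\pi:H\twoheadrightarrow H'$ satisfies $\pi\circ[n]_H = [n]_{H'}\circ\pi$, so in either case $[n]_H = u\varepsilon$ forces $[n]_{H'} = u\varepsilon$; divisibility then follows from item (1). Item (5) follows from $[n]_{H\otimes H'} = [n]_H \otimes [n]_{H'}$, using item (4) applied to the Hopf subalgebra inclusions of $H$ and $H'$ into $H\otimes H'$ in one direction, and taking $n=\lcm(\exp_0(H),\exp_0(H'))$ in the other.

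The substantive point is item (3). The first equality $\exp_0(H^{\mathrm{op}})=\exp_0(H^{\mathrm{cop}})$ follows immediately from $[n]_{H^{\mathrm{op}}}(h)=[n]_{H^{\mathrm{cop}}}(h)=h_{(n)}h_{(n-1)}\cdots h_{(1)}$. For the second equality, the key is to prove by induction on $n$ the formula
\[
S^{*n} = S\circ[n]_{H^{\mathrm{op}}},
\]
whose inductive step uses antimultiplicativity of $S$ (namely $S(a)S(b)=S(ba)$) applied to $S([n]_{H^{\mathrm{op}}}(h_{(1)}))\cdot S(h_{(2)})$, together with the coassociativity identity $\Delta_{n+1}=(\Delta_n\otimes\id)\circ\Delta$. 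Once this is established, the chain of equivalences
\[
[n]_H = u\varepsilon \iff S^{*n} = u\varepsilon \iff S\circ[n]_{H^{\mathrm{op}}} = u\varepsilon \iff [n]_{H^{\mathrm{op}}} = u\varepsilon
\]
completes the proof, using that $S^{*n}$ is the convolution inverse of $[n]_H$ together with the bijectivity of $S$ and $S(1)=1$.

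The main obstacle is precisely the inductive verification of $S^{*n}=S\circ[n]_{H^{\mathrm{op}}}$; once this formula is in hand, every other item is essentially formal.
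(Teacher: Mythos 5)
Your proposal is correct; note that the paper itself gives no proof of this lemma, simply citing \cite{Kas99} for items (1), (2), (4), (5) and \cite{LMS06} for item (3). Your argument — treating $[n]$ as the convolution power $\id^{*n}$ for the formal items, and establishing $S^{*n}=S\circ[n]_{H^{\mathrm{op}}}$ via antimultiplicativity of $S$ for item (3) — is exactly the standard route taken in those references, and all the steps check out (including the implicit use of bijectivity of the antipode, which holds here since $H$ is finite-dimensional).
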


\begin{remark}
Items (1), (2), (4) and (5) are direct and found in \cite{Kas99}, and (3) is \protect{\cite[Proposition 2.2(2)]{LMS06}}.
\end{remark}

We end this section by verifying that $\exp_0(H)$ is invariant under twisting and taking the Drinfeld double as well.

\begin{proposition}\label{prop:exp0basic2}
Let $H$ be a finite-dimensional Hopf algebra. Then
\begin{itemize}
  \item[(6)] Let $J$ be a (left or right) twist for $H$. Then $\exp_0(H^J)=\exp_0(H)$;
  \item[(7)] $\exp_0(D(H))=\exp_0(H)$.
\end{itemize}
\end{proposition}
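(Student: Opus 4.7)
My plan is to prove item (6) first by a direct computation with the twist equation, and then to derive item (7) from it by combining with the other items of Lemma \ref{lem:exp0basic}.

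For item (6), $H^J$ has the same multiplication as $H$ but comultiplication $\Delta^J(h) = J\,\Delta(h)\,J^{-1}$. Iterating yields $\Delta_n^J(h) = J^{(n)}\,\Delta_n(h)\,(J^{(n)})^{-1}$ where $J^{(n)} \in H^{\otimes n}$ is built inductively from $J$ via iterated coproducts (starting with $J^{(2)} = J$). Hence
\[
[n]_{H^J}(h) \;=\; m_n\!\left(J^{(n)}\,\Delta_n(h)\,(J^{(n)})^{-1}\right).
\]
Assuming $n = \exp_0(H) < \infty$, I would aim to show by direct expansion that the right-hand side collapses to $\varepsilon(h)\cdot 1$. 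The strategy is to propagate $J^{(n)}$ and its inverse through $m_n$ using the twist identity $(J\otimes 1)(\Delta\otimes\id)(J) = (1\otimes J)(\id\otimes\Delta)(J)$, so as to reduce to $m_n\circ\Delta_n(h) = [n](h) = \varepsilon(h)\,1$. The reverse divisibility then follows by applying the argument to $J^{-1}$, which is a twist on $H^J$ returning $(H^J)^{J^{-1}} = H$.

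For item (7), the divisibility $\exp_0(H) \mid \exp_0(D(H))$ is immediate from $H \hookrightarrow D(H)$ and Lemma \ref{lem:exp0basic}(4). For the reverse, I would use the well-known fact that $D(H)$ arises from the tensor-product Hopf algebra $H^{*\mathrm{cop}} \otimes H$ by a $2$-cocycle deformation of the multiplication, where the cocycle is built from the canonical evaluation pairing $H^* \otimes H \to \Bbbk$. Since $2$-cocycle deformations of the multiplication are dual to Drinfeld twists of the comultiplication on the dual Hopf algebra, item (6) just established together with Lemma \ref{lem:exp0basic}(2) shows that $\exp_0$ is preserved under such cocycle deformations. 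Consequently
\[
\exp_0(D(H)) \;=\; \exp_0(H^{*\mathrm{cop}}\otimes H) \;=\; \lcm\!\bigl(\exp_0(H^{*\mathrm{cop}}),\,\exp_0(H)\bigr) \;=\; \exp_0(H),
\]
where the middle equality is Lemma \ref{lem:exp0basic}(5) and the final equality uses items (2) and (3).

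The principal obstacle lies in item (6): because $m_n$ is not an algebra map on $H^{\otimes n}$, the conjugation by $J^{(n)}$ cannot simply be discarded after applying $m_n$, and verifying the cancellation requires tracking the cocycle relations through the $n$-fold iterated structure. A secondary difficulty arises in item (7), where one must pin down the precise $2$-cocycle realization of $D(H)$ relative to $H^{*\mathrm{cop}}\otimes H$ and confirm that the duality-based transfer of twist invariance applies cleanly.
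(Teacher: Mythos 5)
Your treatment of item (7) is essentially the paper's: both use the Doi--Takeuchi realization $D(H)=(H^{\ast\mathrm{cop}}\otimes H)_\sigma$, dualize the $2$-cocycle to a twist, and then chain items (2), (3), (5) of Lemma \ref{lem:exp0basic} with the twist-invariance of item (6). That part is fine, modulo the fact that it rests entirely on item (6).

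Item (6), however, contains a genuine gap, and you have in fact named it yourself: since $m_n$ is not an algebra map on $H^{\otimes n}$, the expression $m_n\bigl(J^{(n)}\,\Delta_n(h)\,(J^{(n)})^{-1}\bigr)$ does not reduce to $m_n\circ\Delta_n(h)$ by ``propagating'' $J^{(n)}$ through $m_n$ --- already for $n=2$ one gets $\sum J_i h_{(1)} (J^{-1})_j J^i h_{(2)} (J^{-1})^j$, where the twist components interleave with the $h$-components and do not cancel for any formal reason. Announcing that you ``would aim to show by direct expansion that the right-hand side collapses'' is a statement of intent, not an argument, and the twist identity alone does not produce the collapse. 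The paper supplies the missing device: the identity of Kashina--Montgomery--Ng,
\[
h^{[n+1]}=m_{n+2}\circ(\id\otimes\Delta^J_n\otimes\id)\bigl[(1\otimes J)(1\otimes\Delta(h))(J^{-1}\otimes 1)\bigr],
\]
combined with the observation that if $N_0=\exp_0(H^J)<\infty$ then $m_{N_0-1}\circ\Delta^J_{N_0-1}$ is the convolution inverse of $\id$ and hence equals $S^J$. Substituting the explicit formula for $S^J$ in terms of $S$ and $J$ then makes all twist components cancel via $J^{-1}J=1\otimes 1$, yielding $h^{[N_0]}=\e(h)1$ in $H$, i.e.\ $\exp_0(H)\mid\exp_0(H^J)$; the reverse divisibility follows from $(H^J)^{J^{-1}}=H$ exactly as you propose. (Note also that the tractable direction is to assume finiteness of $\exp_0(H^J)$ and conclude about $H$, the opposite of the direction you set up.) Without this or an equivalent mechanism, your proof of (6) --- and hence of (7) --- is incomplete.
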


\begin{proof}
\begin{enumerate}
  \item[(6)] Let $J=\sum\limits_i J_i\otimes J^i\in H\otimes H$ be a left twist for $H$, and its inverse is denoted by $J^{-1}=\sum\limits_i (J^{-1})_i\otimes (J^{-1})^i$. We remark that the definition of $S^J$ provides
      $$S^J(h)=\sum_{k,l} J_k S(J^k) S(h) S((J^{-1})_l) (J^{-1})^l\;\;\;\;(\forall h\in H).$$

      In order to compute the Sweedler power on $H^J$, we use the equation in \protect{\cite[Lemma 2.5]{KMN12}} that
      $$ h^{[n+1]}=m_{n+2}\circ(\id\otimes\Delta^J_n\otimes\id)\left[(1\otimes J)(1\otimes\Delta(h))(J^{-1}\otimes 1)\right]$$
      for each positive integer $n$. Meanwhile, the equation also holds when $n=0$ with conventions $m_0=u$ and $\Delta^J_0=\varepsilon$.

      Assume $\exp_0(H^J):=N_0<\infty$. It follows that $m_{N_0 -1}\circ\Delta^J_{N_0 -1}=S^J$ on $H^J$, because they are both the convolution inverses of $\id\in \Hom_\Bbbk(H^J,H)$. Then for all $h\in H$, we make computations:
      \begin{eqnarray*}
      h^{[N_0]}
      &=& m_{N_0+1}\circ(\id\otimes\Delta^J_{N_0-1}\otimes \id)\left[(1\otimes J)(1\otimes\Delta(h))(J^{-1}\otimes 1)\right] \\
      &=& m_3\circ(\id\otimes S^J\otimes \id)\left[(1\otimes J)(1\otimes\Delta(h))(J^{-1}\otimes 1)\right] \\
      &=& \sum m_3\circ(\id\otimes S^J\otimes \id)\left[\sum\limits_{i,j} (J^{-1})_j\otimes J_ih_{(1)}(J^{-1})^j\otimes J^i h_{(2)} \right] \\
      &=& \sum\limits_{i,j} (J^{-1})_j S^J\left(J_ih_{(1)}(J^{-1})^j\right) J^i h_{(2)} \\
      &=& \sum\limits_{i,j,k,l} (J^{-1})_j J_k S(J^k) S\left(J_ih_{(1)}(J^{-1})^j\right) S((J^{-1})_l) (J^{-1})^l J^i h_{(2)} \\
      &=& \sum\limits_{i,j,k,l} (J^{-1})_j J_k S\left((J^{-1})_l J_ih_{(1)}(J^{-1})^j J^k\right) (J^{-1})^l J^i h_{(2)}. \\
      \end{eqnarray*}
      Note that
      $$\sum\limits_{j,k}(J^{-1})_j J_k\otimes (J^{-1})^j J^k=\sum\limits_{i,l}(J^{-1})_l J_i\otimes (J^{-1})^l J^i =J^{-1}J=1\otimes 1.$$
      Thus
      \begin{eqnarray*}
      h^{[N_0]}
      &=& \sum\limits_{i,j,k,l} (J^{-1})_j J_k S\left((J^{-1})_l J_ih_{(1)}(J^{-1})^j J^k\right) (J^{-1})^l J^i h_{(2)} \\
      &=& \sum S(h_{1})h_{(2)} ~=~ \varepsilon(h)1.
      \end{eqnarray*}
      That is to say $[N_0]=u\circ\varepsilon$, and thus $\exp_0(H)\mid\exp_0(H^J)$ by Lemma \ref{lem:exp0basic}(1). However, it is known that $J^{-1}$ is a left twist for $H^J$ and $H=(H^J)^{J^{-1}}$. Therefore, $\exp_0(H^J)\mid\exp_0(H)$ holds similarly. As a consequence, we have $\exp_0(H^J)=\exp_0(H)$. Of course, the process above also shows that $\exp_0(H^J)=\infty$ if and only if $\exp_0(H)=\infty$.

      The property holds when $J$ is a right twist as well, since $J^{-1}$ is a left twist for $H$ at that time.

  \item[(7)] As mentioned in \cite[Section 2]{LMS06}, this could be inferred by the following isomorphism between Hopf algebras introduced in \cite{DT94}:
      $$D(H)=(H^{\ast\mathrm{cop}}\otimes H)_\sigma,$$
      where $\sigma: (f\otimes h, f'\otimes h')\mapsto \langle f,1\rangle \langle f',h\rangle \langle \varepsilon,h'\rangle$ is a left 2-cocycle for $H^{\ast\mathrm{cop}}\otimes H$, and $(H^{\ast\mathrm{cop}}\otimes H)_\sigma$ denotes the corresponding 2-cocycle deformation.

      Specifically, according to Lemma \ref{lem:exp0basic} and the duality between (left) 2-cocycles and twists, we could know that
      \begin{eqnarray*}
      \exp_0(D(H))
      &=& \exp_0((H^{\ast\mathrm{cop}}\otimes H)_\sigma)
      ~=~ \exp_0(((H^{\ast \rm cop}\otimes H)_\sigma)^\ast) \\
      &=& \exp_0(((H^{\ast \rm cop}\otimes H)^\ast)^{\sigma^\ast})
      ~=~ \exp_0((H^{\ast \rm cop}\otimes H)^{\ast}) \\
      &=& \exp_0(H^{\ast \rm cop}\otimes H)
      ~=~ {\rm lcm}(\exp_0(H^{\ast \rm cop}),\exp_0(H)) \\
      &=& {\rm lcm}(\exp_0(H^\ast),\exp_0(H))
      ~=~ {\rm lcm}(\exp_0(H),\exp_0(H)) \\
      &=& \exp_0(H),
      \end{eqnarray*}
      where $\sigma^\ast$ denotes the left twist for $H^\ast$ dual to $\sigma$.
\end{enumerate}
\end{proof}

\section{Exponent of the Pivotal Semidirect Product \texorpdfstring{$H\rtimes \Bbbk\langle S^2\rangle$}{}}

In fact, $\exp(H)=\exp_0(H)$ holds as long as $H$ is pivotal. Recall that a Hopf algebra $H$ is said to be \textit{pivotal}, if there exists a grouplike element $g\in H$ such that
$$\forall h\in H,~S^2(h)=ghg^{-1}.$$
Such a grouplike element $g$ is called a \textit{pivotal element} of $H$. The claim above could be followed from the lemma below:

\begin{lemma}(\cite[Lemma 4.2]{Shi15})\label{lem:pivotpow}
Let $H$ be a Hopf algebra with a grouplike element $g\in H$. Denote $\varphi$ as the inner automorphism on $H$ determined by $g$. Then
$$(hg)^{[n]}=\sum h_{(1)}\varphi(h_{(2)})\cdots\varphi^{n-1}(h_{(n)})g^n$$
holds for each $n\geq 1$ and all $h\in H$.
\end{lemma}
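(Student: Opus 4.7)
My plan is to prove Lemma \ref{lem:pivotpow} by induction on $n$, using coassociativity together with the single relation $gx = \varphi(x)g$ that encodes the inner automorphism.

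For the base case $n=1$, both sides collapse to $hg$. For the inductive step, the key observation is that since $g$ is grouplike, $\Delta(hg) = \sum h_{(1)}g \otimes h_{(2)}g$, which allows me to peel off a first factor via the identity
\[
(hg)^{[n+1]} = \sum (h_{(1)}g) \cdot (h_{(2)}g)^{[n]},
\]
coming from $\Delta_{n+1} = (\id\otimes\Delta_n)\circ\Delta$. Applying the inductive hypothesis to $(h_{(2)}g)^{[n]}$ and then using coassociativity to re-index, I obtain
\[
(hg)^{[n+1]} = \sum h_{(1)}g\cdot h_{(2)}\varphi(h_{(3)})\cdots\varphi^{n-1}(h_{(n+1)})g^n.
\]

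The remaining task is simply to commute the leading $g$ to the right past the string of $\varphi^k$-terms. Because $gx=\varphi(x)g$ for every $x\in H$, moving $g$ past $\varphi^k(h_{(k+2)})$ promotes that factor to $\varphi^{k+1}(h_{(k+2)})$. Performing this shift $n$ times yields
\[
g\cdot h_{(2)}\varphi(h_{(3)})\cdots\varphi^{n-1}(h_{(n+1)}) = \varphi(h_{(2)})\varphi^2(h_{(3)})\cdots\varphi^n(h_{(n+1)})\cdot g,
\]
and combining with the trailing $g^n$ produces the desired $g^{n+1}$. Substituting back gives exactly the claimed formula at level $n+1$.

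There is no real obstacle here: the argument is a routine induction, and the only bookkeeping point to watch is that each pass of $g$ through one factor raises the exponent of $\varphi$ on that factor by exactly one, so after moving $g$ all the way through we land precisely on the sequence $h_{(1)},\varphi(h_{(2)}),\ldots,\varphi^n(h_{(n+1)})$. Both the grouplike property of $g$ and the commutation rule $gh = \varphi(h)g$ are used essentially, but neither invokes pivotality itself; the statement is purely about an arbitrary grouplike element.
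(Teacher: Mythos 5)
Your induction is correct and is the standard argument for this identity: the paper itself gives no proof, only a citation to Shimizu, and the proof there proceeds by exactly this kind of induction using $\Delta_{n+1}=(\id\otimes\Delta_n)\circ\Delta$, the grouplike property of $g$, and the commutation rule $gx=\varphi(x)g$ (note your convention $\varphi(x)=gxg^{-1}$ agrees with how the lemma is applied later, e.g.\ with $S^4(h)=ghg^{-1}$ in Lemma \ref{lem:QTexp}(2)). No gaps.
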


\begin{corollary}\label{cor:pivotal}
Let $H$ be a pivotal Hopf algebra. Then $\exp_0(H)=\exp(H)$.
\end{corollary}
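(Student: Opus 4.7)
The plan is to reduce the two definitions to a single identity via Lemma \ref{lem:pivotpow}. Since $H$ is pivotal with pivotal element $g$, we have $S^2(x) = gxg^{-1}$, so $S^{-2}(x) = g^{-1}xg$, which is exactly the inner automorphism associated to the grouplike element $g^{-1}$. Applying Lemma \ref{lem:pivotpow} with $g$ replaced by $g^{-1}$ gives
\[
(hg^{-1})^{[n]} = m_n\circ(\id\otimes S^{-2}\otimes\cdots\otimes S^{-2(n-1)})\circ\Delta_n(h)\cdot g^{-n},
\]
which rearranges to
\[
m_n\circ(\id\otimes S^{-2}\otimes\cdots\otimes S^{-2(n-1)})\circ\Delta_n(h) \;=\; (hg^{-1})^{[n]}\,g^n.
\]
This single identity is what links the two exponents.

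Next I would show $\exp(H)\mid n \Longleftrightarrow \exp_0(H)\mid n$ for every $n$. Suppose $\exp_0(H)\mid n$. Evaluating $[n]=u\circ\varepsilon$ on the grouplike $g$ gives $g^n=g^{[n]}=1$, and on $hg^{-1}$ gives $(hg^{-1})^{[n]}=\varepsilon(hg^{-1})1=\varepsilon(h)1$; plugging both into the displayed identity yields $\exp(H)\mid n$. Conversely, if $\exp(H)\mid n$, then substituting $h=g$ into the identity produces $1 = 1\cdot g^{-n}\cdot g^n$ compared with $\varepsilon(g)\cdot 1$, extracting $g^n=1$; then for general $h$ the identity forces $(hg^{-1})^{[n]}=\varepsilon(h)1=\varepsilon(hg^{-1})1$, and since the map $h\mapsto hg^{-1}$ is a bijection of $H$, this is exactly $[n]=u\circ\varepsilon$, i.e.\ $\exp_0(H)\mid n$.

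Together with Lemma \ref{lem:expbasic}(1) and Lemma \ref{lem:exp0basic}(1), this mutual divisibility gives $\exp_0(H)=\exp(H)$ in the finite case, and the conventions adopted in Section \ref{Section2} cover the infinite case (if either side is finite, the divisibility just proved forces the other to be finite too). There is no real obstacle here; the only small point to keep in mind is that $g^n=1$ must be extracted as part of the argument rather than assumed, and the cleanest way is the substitution $h=g$ into the key identity.
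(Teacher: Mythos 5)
Your proposal is correct and takes essentially the route the paper intends: the paper offers no explicit proof of Corollary \ref{cor:pivotal}, saying only that it follows from Lemma \ref{lem:pivotpow}, and your application of that lemma to the grouplike $g^{-1}$ (so that the inner automorphism becomes $S^{-2}$ and the identity $m_n\circ(\id\otimes S^{-2}\otimes\cdots\otimes S^{-2n+2})\circ\Delta_n(h)=(hg^{-1})^{[n]}g^n$ appears) is exactly how that gap is meant to be filled. The care you take to extract $g^n=1$ by evaluating at $h=g$, rather than assuming it, is the one point where a sloppy write-up could go wrong, and you handle it correctly.
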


A known result is that any finite-dimensional $H$ could be embedded into a pivotal Hopf algebra, namely a semidirect product $H\rtimes\k\langle S^2\rangle$. Thus $\exp(H)$ and $\exp_0(H)$ are bounded by the equal exponents of $H\rtimes\k\langle S^2\rangle$, especially when the latter is finite.

Let us recall the definition. Firstly it is clear from \cite[Theorem 1]{Rad76} that the subgroup generated by $S^2\in\End_\k(H)$ is finite, which is denoted by $\langle S^2\rangle$ in this paper.
\begin{definition}
Let $H$ be a finite-dimensional Hopf algebra. The semidirect product (or, smash product) $H\rtimes\k\langle S^2\rangle$ of $H$ is defined through
\begin{itemize}
  \item $H\rtimes \k\langle S^2\rangle=H\otimes \k\langle S^2\rangle$ as a coalgebra;
  \item The multiplication is that $(h\rtimes S^{2i})(k\rtimes S^{2j}):=hS^{2i}(k)\rtimes S^{2(i+j)}$ for all $h,k\in H$ and $i,j\in\mathbb Z$;
  \item The unit element is $1\rtimes \id$;
  \item The antipode is then $S_{H\rtimes \k\langle S^2\rangle}:h\rtimes S^{2i}\mapsto S^{-2i+1}(h)\rtimes S^{-2i}$.
\end{itemize}
\end{definition}
Note that this is indeed a pivotal Hopf algebra (e.g. \cite[Theorem 2.13]{Mol77} and \cite[Proposition 2(1)]{Som98}) with a pivotal element $1\rtimes S^2$, and $H\cong H\rtimes\id \hookrightarrow H\rtimes\k\langle S^2\rangle$ is an inclusion of Hopf algebras.

The remaining of this section is devoted to establish a formula for $\exp(H\rtimes \Bbbk\langle S^2\rangle)$. For this purpose, following notation should be given, which could be regarded as special cases of \textit{twisted exponents} introduced in \protect{\cite[Definition 3.1]{SV17}} and \protect{\cite[Definition 3.1]{MVW16}}.

\begin{notation}\label{22}
Let $H$ be a finite-dimensional Hopf algebra. For any $i\in\mathbb Z$, we denote
$$\exp_{2i}(H):=\min\{n\geq 1\mid m_n\circ(\id\otimes S^{2i}\otimes\cdots\otimes S^{2(n-1)i})\circ\Delta_n=u\circ\varepsilon\}.$$
\end{notation}
Of course $\exp(H)$ is exactly $\exp_{-2}(H)$ with the notation.

The formula for $\exp(H\rtimes\k\langle S^2\rangle)$ would be established by steps.

\begin{lemma}\label{lem:QTexp}
Let $H$ be a finite-dimensional Hopf algebra. Then
\begin{itemize}
  \item[(1)] We have
        $$\exp(H\rtimes \k\langle S^2\rangle)=\lcm(\exp_{2i}(H)\mid i\in\mathbb Z);$$
  \item[(2)] If $H$ is quasitriangular, then for all $i\in\Z$,
        $$\exp_{4i}(H)=\exp_0(H)\;\;\;\;\text{and}\;\;\;\;\exp_{4i-2}(H)=\exp(H)$$
        hold, and thus $$\exp(H\rtimes \k\langle S^2\rangle)=\lcm(\exp_0(H),\exp(H)).$$
\end{itemize}
\end{lemma}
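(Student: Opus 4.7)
The plan is to reduce everything to a Sweedler-power computation on the pivotal Hopf algebra $A := H \rtimes \k\langle S^2\rangle$. Since $A$ is pivotal with pivotal element $1 \rtimes S^2$, Corollary \ref{cor:pivotal} gives $\exp(A) = \exp_0(A)$, so for part~(1) it suffices to analyze the plain Sweedler power $[n]$ on $A$. I would decompose a basis element as $h \rtimes S^{2i} = (h \rtimes \id)(1 \rtimes S^{2i})$ with $1 \rtimes S^{2i}$ grouplike in $A$, and observe that the inner automorphism of $A$ induced by $1 \rtimes S^{2i}$ restricts to $S^{2i} \otimes \id$ on the Hopf subalgebra $H \rtimes \id$. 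Applying Lemma \ref{lem:pivotpow} with $g = 1 \rtimes S^{2i}$ yields
$$(h \rtimes S^{2i})^{[n]} = \bigg(\sum h_{(1)} S^{2i}(h_{(2)}) S^{4i}(h_{(3)}) \cdots S^{2(n-1)i}(h_{(n)})\bigg) \rtimes S^{2ni},$$
whose $H$-component is exactly the $\exp_{2i}$-map on $h$ from Notation~\ref{22}. The condition that this equals $\varepsilon(h)(1 \rtimes \id)$ for every $h \in H$ and $i \in \mathbb{Z}$ splits into (i) $\exp_{2i}(H) \mid n$ for all $i \in \mathbb{Z}$ and (ii) $\ord(S^2) \mid n$; the latter is subsumed by the former because $\exp_{-2}(H) = \exp(H)$ is already in the family, and standard divisibility arguments for finite-dimensional Hopf algebras give $\ord(S^2) \mid \exp(H)$ whenever the latter is finite. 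Thus $\exp(A) = \lcm\{\exp_{2i}(H) \mid i \in \mathbb{Z}\}$, which is part~(1).

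For part~(2), I exploit the Drinfeld element $\mathfrak{u}$ of the quasi-triangular $H$, satisfying $S^{2}(h) = \mathfrak{u} h \mathfrak{u}^{-1}$ and hence $S^{2k}(h) = \mathfrak{u}^k h \mathfrak{u}^{-k}$ for every $k \in \mathbb{Z}$. Substituting into Notation~\ref{22} and telescoping the interior $\mathfrak{u}^k \mathfrak{u}^{-k}$ cancellations produces
$$m_n \circ (\id \otimes S^{2k} \otimes \cdots \otimes S^{2(n-1)k}) \circ \Delta_n(h) = \bigg(\sum h_{(1)} \mathfrak{u}^k h_{(2)} \mathfrak{u}^k \cdots \mathfrak{u}^k h_{(n)}\bigg)\, \mathfrak{u}^{-(n-1)k},$$
so the $\exp_{2k}$-condition becomes $\sum h_{(1)} \mathfrak{u}^k h_{(2)} \cdots \mathfrak{u}^k h_{(n)} = \varepsilon(h)\, \mathfrak{u}^{(n-1)k}$. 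The plan is then to show this depends only on $k \bmod 2$. For $k = 2i$ (even), I would compare against the Sweedler-power condition $[n] = u \circ \varepsilon$ using the identity $\Delta(\mathfrak{u}^k) = (\mathfrak{u}^k \otimes \mathfrak{u}^k) Q^{-k}$, where $Q := R_{21}R$ commutes with $\Delta(H)$ and with $\mathfrak{u} \otimes \mathfrak{u}$; after pushing the $\mathfrak{u}$'s through the Sweedler factors and absorbing the coproduct-central powers of $Q$, the condition should collapse to $[n] = u \circ \varepsilon$, giving $\exp_{4i}(H) = \exp_0(H)$. For $k = 2i-1$ (odd), the analogous reduction to the $k = -1$ case gives $\exp_{4i-2}(H) = \exp_{-2}(H) = \exp(H)$. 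Combined with part~(1), only these two values appear in the $\lcm$, so $\exp(H \rtimes \k\langle S^2\rangle) = \lcm(\exp_0(H), \exp(H))$.

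The main obstacle is the parity reduction in part~(2). Because $\mathfrak{u}^{2i}$ is \emph{not} grouplike in a general quasi-triangular Hopf algebra, Lemma~\ref{lem:pivotpow} cannot be invoked directly with $g = \mathfrak{u}^{2i}$; the grouplike failure is measured precisely by the $\Delta$-central factor $Q^{-2i}$. The technical heart is to verify that, after the telescoping expansion, the accumulated $Q$-contributions collapse appropriately against the counit condition. An alternative route is through the Hopf-algebra isomorphism $H^{R^{-1}} \cong H^{\mathrm{cop}}$ (coming from $R\Delta(h)R^{-1} = \Delta^{\mathrm{op}}(h)$) combined with the twist-invariance of the exponent, which turns the $\exp_{2k}$-question into a more symmetric comparison between $H$ and its coopposite and picks off the parity information cleanly.
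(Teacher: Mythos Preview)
Your argument for part~(1) is essentially the direct computation the paper has in mind (and cites from \cite{LL??}); the one subtle point --- that $\ord(S^2)\mid\exp(H)$ so the group-part condition is absorbed into the $\exp_{-2}$ term --- is true via the Drinfeld element of $D(H)$, so this part is fine.

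For part~(2), however, there is a genuine gap, and it stems from choosing the wrong implementing element. You work with the Drinfeld element $\mathfrak{u}$, which implements $S^{2}$ but is \emph{not} grouplike; you then correctly identify this as the main obstacle and leave the $Q$-bookkeeping (or the twist/coopposite alternative) as a sketch that is not carried out. The paper sidesteps this entirely by using Drinfeld's result \cite{Dri89} that in any quasitriangular Hopf algebra $S^{4}$ is inner by a \emph{grouplike} element $g$ (namely $g=\mathfrak{u}\,S(\mathfrak{u})^{-1}$). Since the comparison you need is between $\exp_{4i}$ and $\exp_{0}$ (respectively $\exp_{4i-2}$ and $\exp_{-2}$), the relevant shift is by powers of $S^{4}$, and now Lemma~\ref{lem:pivotpow} applies verbatim with the grouplike $g^{i}$: one gets
\[
(hg^{i})^{[n]}=\sum h_{(1)}S^{4i}(h_{(2)})\cdots S^{4(n-1)i}(h_{(n)})\,g^{ni},
\]
and since $\ord(g)$ divides both $\exp_{0}(H)$ and $\exp_{4i}(H)$ (as $g$ is grouplike and fixed by $S^{2}$), setting $n$ to either forces divisibility by the other. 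The odd case is handled the same way after writing $S^{4i-2}=\mathrm{Ad}_{g^{i}}\circ S^{-2}$ and inserting it into the $\exp_{-2}$ map applied to $hg^{i}$. So the fix is simply: replace $\mathfrak{u}$ by the grouplike $g$ implementing $S^{4}$, and the argument becomes a two-line application of Lemma~\ref{lem:pivotpow} with no $Q$-corrections needed.
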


\begin{proof}
\begin{itemize}
  \item[(1)] This is \cite[Proposition 4.12]{LL??}, proved by direct computations.
  \item[(2)] Suppose that $H$ is quasitriangular. According to \cite[Section 3]{Dri89}, there exists a grouplike element $g\in H$ determining the inner automorphism $S^4$ on $H$. Thus Lemma \ref{lem:pivotpow} provides that
        $$(hg^i)^{[n]}=\sum h_{(1)}S^{4i}(h_{(2)})\cdots S^{4(n-1)i}(h_{(n)})g^{ni}
          \;\;\;\;(\forall i\in\Z)$$
        holds for all $h\in H$ and each $n\geq 1$. But it is clear that the (multiplication) order of $g$ divides $\gcd(\exp_0(H),\exp_{4i}(H))$. Thus the choice of $n$ being either $\exp_0(H)$ or $\exp_{4i}(H)$ in the equation shows that $n$ is divided by the other one. We conclude that $\exp_{4i}(H)=\exp_0(H)$ for each $i\in\Z$.

        Similarly, the equation $S^{4i-2}(h)=g^iS^{-2}(h)g^{-i}$ follows that
        \begin{eqnarray*}
        & & m_n\circ (\id\otimes S^{-2}\otimes\cdots\otimes S^{-2n+2})\circ\Delta_n(hg^i)  \\
        &=& \sum (h_{(1)}g^i)S^{-2}(h_{(2)}g^i)\cdots S^{-2n+2}(h_{(n)}g^i)  \\
        &=& \sum h_{(1)}\left(g^iS^{-2}(h_{(2)})g^{-i}\right)
                 \cdots \left(g^{(n-1)i}S^{-2n+2}(h_{(n)})g^{-(n-1)i}\right)\cdot g^{ni} \\
        &=& \sum h_{(1)}S^{4i-2}(h_{(2)})\cdots S^{(n-1)(4i-2)}(h_{(n)})g^{ni}\;\;\;\;(\forall i\in\Z)
        \end{eqnarray*}
        holds for all $h\in H$ and each $n\geq 1$. The (multiplication) order of $g$ also divides $\gcd(\exp(H),\exp_{4i-2}(H))$. Thus $\exp_{4i-2}(H)=\exp(H)$ for each $i\in\Z$ due to the same reason.
\end{itemize}
\end{proof}

\begin{corollary}\label{cor:Ddpivot}
Let $H$ be a finite-dimensional Hopf algebra. Then
$$\exp(D(H)\rtimes\k\langle S_{D(H)}{}^2\rangle)=\exp(H\rtimes\k\langle S^2\rangle).$$
\end{corollary}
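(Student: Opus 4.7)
The plan is to reduce both sides of the claimed identity to the common value $\lcm(\exp_0(H),\exp(H))$, exploiting the fact that $D(H)$ is automatically quasitriangular (so Lemma \ref{lem:QTexp}(2) is available on that side), even when $H$ itself is not. First I would apply Lemma \ref{lem:QTexp}(1) to each side, writing
\[
\exp(H\rtimes \k\langle S^2\rangle)=\lcm_{i\in\Z}\exp_{2i}(H), \qquad \exp(D(H)\rtimes \k\langle S_{D(H)}{}^2\rangle)=\lcm_{i\in\Z}\exp_{2i}(D(H)).
\]
On the Drinfeld-double side, quasitriangularity of $D(H)$ combined with Lemma \ref{lem:QTexp}(2) collapses the family $\{\exp_{2i}(D(H))\}_{i\in\Z}$ to just the two values $\exp_0(D(H))$ and $\exp(D(H))$; invoking Lemma \ref{lem:expbasic}(7) and Proposition \ref{prop:exp0basic2}(7) then yields $\exp(D(H)\rtimes \k\langle S_{D(H)}{}^2\rangle)=\lcm(\exp_0(H),\exp(H))$.

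For the $H$ side I would argue a two-sided divisibility to arrive at the same value. One direction is free: since $\exp_0(H)=\exp_{0}(H)$ and $\exp(H)=\exp_{-2}(H)$ both appear in the family $\{\exp_{2i}(H)\}_{i\in\Z}$, we have $\lcm(\exp_0(H),\exp(H))\mid \lcm_i\exp_{2i}(H)$. For the reverse, I would invoke the standard Hopf subalgebra inclusion $H\hookrightarrow D(H)$; because it intertwines the antipodes, restricting the map $m_n\circ(\id\otimes S^{2i}\otimes\cdots\otimes S^{2(n-1)i})\circ\Delta_n$ on $D(H)$ to $H$ produces its $H$-analog, yielding the twisted-exponent version of Lemma \ref{lem:expbasic}(4), namely $\exp_{2i}(H)\mid \exp_{2i}(D(H))$ for every $i\in\Z$. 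Taking $\lcm$ over $i$ then gives $\lcm_i\exp_{2i}(H)\mid\lcm_i\exp_{2i}(D(H))=\lcm(\exp_0(H),\exp(H))$, and combining with the free direction produces $\exp(H\rtimes\k\langle S^2\rangle)=\lcm(\exp_0(H),\exp(H))$, which matches the value already computed for the right-hand side.

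I do not anticipate any real obstacle: the whole argument is a short chain of lcm manipulations, once one notices the key observation that $D(H)$ is quasitriangular and so Lemma \ref{lem:QTexp}(2) applies to it regardless of the status of $H$. The only routine verification is the twisted-exponent analog of Lemma \ref{lem:expbasic}(4), which is immediate from the definition of $\exp_{2i}$ together with the fact that the Hopf embedding $H\hookrightarrow D(H)$ intertwines antipodes and coproducts.
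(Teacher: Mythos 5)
Your argument is correct and follows essentially the same route as the paper: both proofs hinge on the quasitriangularity of $D(H)$, Lemma \ref{lem:QTexp}(2), and the invariance of both exponents under taking the Drinfeld double, sandwiching everything around the common value $\lcm(\exp_0(H),\exp(H))$. The only (harmless) difference is that the paper obtains the divisibility $\exp(H\rtimes\k\langle S^2\rangle)\mid\exp(D(H)\rtimes\k\langle S_{D(H)}{}^2\rangle)$ from the Hopf algebra inclusion $H\rtimes\k\langle S^2\rangle\hookrightarrow D(H)\rtimes\k\langle S_{D(H)}{}^2\rangle$ together with Lemma \ref{lem:expbasic}(4), whereas you derive it termwise from $\exp_{2i}(H)\mid\exp_{2i}(D(H))$ via $H\hookrightarrow D(H)$ and Lemma \ref{lem:QTexp}(1).
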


\begin{proof}
It is clear that $\ord(S_{D(H)}{}^2)=\ord(S^2)$, and as a consequence
$\exp(H\rtimes\k\langle S^2\rangle)\mid\exp(D(H)\rtimes\k\langle S_{D(H)}{}^2\rangle)$
holds because of the inclusion of Hopf algebras:
\begin{eqnarray*}
H\rtimes \k\langle S^2\rangle &\hookrightarrow& D(H)\rtimes\k\langle S_{D(H)}{}^2\rangle \\
h\rtimes S^{2i} &\mapsto& (\varepsilon\bowtie h)\rtimes S_{D(H)}{}^{2i}.
\end{eqnarray*}

On the other hand, it is well-known that $D(H)$ is quasitriangular. Then according to the invariance under taking the Drinfeld double (Proposition \ref{prop:exp0basic2}(7) and Lemma \ref{lem:expbasic}(7)), %and its remark
as well as Lemma \ref{lem:QTexp}(2),
\begin{eqnarray*}
    \exp(D(H)\rtimes\k\langle S_{D(H)}{}^2\rangle)
&=& \lcm\left(\exp_0(D(H)),\exp(D(H))\right) \\
&=& \lcm\left(\exp_0(H),\exp(H)\right)
~\mid~ \exp(H\rtimes \k\langle S^2\rangle).
\end{eqnarray*}
The proof is then completed.
\end{proof}

\begin{corollary}\label{27}
Let $H$ be a finite-dimensional Hopf algebra. Then
$$\exp(H\rtimes\k\langle S^2\rangle)=\lcm(\exp_0(H),\exp(H)).$$
\end{corollary}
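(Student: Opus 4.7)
The plan is to reduce to the quasitriangular case, which is already handled in Lemma \ref{lem:QTexp}(2), by passing through the Drinfeld double. This is possible because all three relevant quantities, namely $\exp$, $\exp_0$, and the pivotal-semidirect-product exponent, are invariant under taking $D(-)$.

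First, I would apply Corollary \ref{cor:Ddpivot} to replace $H$ by $D(H)$, obtaining
\[
\exp(H\rtimes\k\langle S^2\rangle)=\exp(D(H)\rtimes\k\langle S_{D(H)}{}^2\rangle).
\]
Since $D(H)$ is quasitriangular, Lemma \ref{lem:QTexp}(2) applies and gives
\[
\exp(D(H)\rtimes\k\langle S_{D(H)}{}^2\rangle)=\lcm\bigl(\exp_0(D(H)),\exp(D(H))\bigr).
\]
Finally, I would invoke the Drinfeld-double invariance of both exponents, namely Proposition \ref{prop:exp0basic2}(7) for $\exp_0$ and Lemma \ref{lem:expbasic}(7) for $\exp$, to rewrite the right-hand side as $\lcm(\exp_0(H),\exp(H))$, which is the desired formula.

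There is essentially no obstacle, since every ingredient has already been proved in the excerpt; in fact, the chain of equalities displayed in the proof of Corollary \ref{cor:Ddpivot} already establishes the divisibility $\lcm(\exp_0(H),\exp(H))\mid\exp(H\rtimes\k\langle S^2\rangle)$, and the reverse divisibility follows immediately from the argument above (or equivalently from the same chain read as an equality). The only mild point worth double-checking is the convention handling of infinite exponents: both statements used (Corollary \ref{cor:Ddpivot} and Lemma \ref{lem:QTexp}(2)) are compatible with $\exp=\infty$ thanks to the divisibility conventions made in Section \ref{Section2}, so the formula remains valid in that case as well.
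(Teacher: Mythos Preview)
Your proposal is correct and follows essentially the same route as the paper: apply Corollary \ref{cor:Ddpivot}, then Lemma \ref{lem:QTexp}(2) for the quasitriangular $D(H)$, then the Drinfeld-double invariance from Lemma \ref{lem:expbasic}(7) and Proposition \ref{prop:exp0basic2}(7). This is exactly the chain of equalities the paper writes down.
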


\begin{proof}
This is a conclusion of Lemma \ref{lem:QTexp}, Corollary \ref{cor:Ddpivot}, and the invariance under taking the Drinfeld double (Lemma \ref{lem:expbasic}(7) and Proposition \ref{prop:exp0basic2}):
\begin{eqnarray*}
    \exp(H\rtimes \k\langle S^2\rangle)
&=& \exp(D(H)\rtimes\k\langle S_{D(H)}{}^2\rangle) \\
&=& \lcm\left(\exp_0(D(H)),\exp(D(H))\right) \\
&=& \lcm\left(\exp_0(H),\exp(H)\right).
\end{eqnarray*}
\end{proof}

\begin{remark}
A similar result is \protect{\cite[Theorem 3.4]{MVW16}}. They describe the exponent of smash coproduct $H~\natural ~\k^G$ with $\exp(G)$ and twisted exponents of $H$, where $G$ acts as Hopf algebra automorphisms on the involutory Hopf algebra $H$.
\end{remark}

\section{Finiteness of Exponents}

In the final section, we study the finiteness (and upper bounds) of $\exp(H)$ and $\exp_0(H)$ for a finite-dimensional Hopf algebra $H$. Firstly recall in \cite[Theorem 4.3]{EG99} that $\exp(H)<\infty$ when $H$ is semisimple and cosemisimple, and $H$ must be involutory in this case (\cite[Theorem 3.1]{EG98}) which follows that $\exp_0(H)=\exp(H)<\infty$ holds. Therefore, we are supposed to focus on the case when $H$ is non-cosemisimple or non-semisimple.

Our results are mainly divided into two situations whether the characteristic of the base field $\k$ is $0$ or positive. Note that when $\mathrm{char}~\k=0$, the semisimplicity and cosemisimplicity for $H$ are equivalent (\cite[Theorem 3.3]{LR88}). Hence it is enough for us to consider the following two cases:
\begin{itemize}
  \item $H$ is non-cosemisimple in characteristic $0$;
  \item $H$ is finite-dimensional in positive characteristic.
\end{itemize}

\subsection{Finiteness of \texorpdfstring{$\exp_0(H)$}{exp0(H)}}

Recall in \protect{\cite[Theorem 4.10]{EG99}} that $\exp(H)<\infty$ as long as $H$ is finite-dimensional in positive characteristic. With the help of the semidirect product $H\rtimes\k\langle S^2\rangle$, we could directly infer that $\exp_0(H)$ is also finite in this case:

\begin{proposition}\label{26}
Let $H$ be a finite-dimensional Hopf algebra over a field $\k$ of positive characteristic. Then $\exp_0(H)<\infty$.
\end{proposition}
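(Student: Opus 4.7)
The plan is to exploit the pivotal overalgebra $H\rtimes\k\langle S^2\rangle$ together with the machinery assembled in Section~3. The key observation is that $\k\langle S^2\rangle$ is a finite cyclic group (by Radford's finite-order result already cited), so $H\rtimes\k\langle S^2\rangle$ is again finite-dimensional over $\k$. Since $\mathrm{char}\,\k > 0$, the Etingof--Gelaki finiteness theorem \cite[Theorem 4.10]{EG99} applies to this larger Hopf algebra, giving $\exp(H\rtimes\k\langle S^2\rangle) < \infty$.

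From here there are two essentially equivalent finishing moves. The cleanest is to quote Corollary~\ref{27} directly:
\[
\exp(H\rtimes\k\langle S^2\rangle) \;=\; \lcm\!\left(\exp_0(H),\exp(H)\right).
\]
Since the left-hand side is a finite positive integer, both $\exp_0(H)$ and $\exp(H)$ must be finite (using our convention that any positive integer divided by $\infty$ is $\infty$, so an infinite term on the right would force the $\lcm$ to be $\infty$). In particular $\exp_0(H)<\infty$, which is the desired conclusion.

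Alternatively, one can argue without invoking Corollary~\ref{27}: because $H\rtimes\k\langle S^2\rangle$ is pivotal (with pivotal element $1\rtimes S^2$), Corollary~\ref{cor:pivotal} gives
\[
\exp_0(H\rtimes\k\langle S^2\rangle) \;=\; \exp(H\rtimes\k\langle S^2\rangle) \;<\; \infty,
\]
and then the Hopf subalgebra inclusion $H\cong H\rtimes\id\hookrightarrow H\rtimes\k\langle S^2\rangle$ together with Lemma~\ref{lem:exp0basic}(4) yields $\exp_0(H)\mid\exp_0(H\rtimes\k\langle S^2\rangle)<\infty$.

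There is no real obstacle here; the substance of the argument has been front-loaded into \cite[Theorem 4.10]{EG99} (for $\exp$ in positive characteristic) and into the pivotal-embedding construction of Section~3. The proof is essentially a one-line combination of these two ingredients, and I would present the Corollary~\ref{27} version since it makes the reduction to the known case most transparent.
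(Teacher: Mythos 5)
Your proposal is correct and matches the paper's argument: the paper's own proof is precisely your ``alternative'' finishing move, namely that $H\rtimes\k\langle S^2\rangle$ is finite-dimensional and pivotal, so $\exp_0(H\rtimes\k\langle S^2\rangle)=\exp(H\rtimes\k\langle S^2\rangle)<\infty$ by the positive-characteristic finiteness of $\exp$, and then $\exp_0(H)$ divides this via the Hopf subalgebra inclusion. Your primary route through Corollary~\ref{27} is a harmless and non-circular variant (that corollary is established earlier, in Section~3), so either presentation is fine.
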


\begin{proof}
Since $H\rtimes \k\langle S^2\rangle$ is finite-dimensional and pivotal over $\k$, we know that
$$\exp_0(H\rtimes \k\langle S^2\rangle)=\exp(H\rtimes \Bbbk\langle S^2\rangle)<\infty.$$
However, $H\hookrightarrow H\rtimes \k\langle S^2\rangle$ is a Hopf subalgebra. Thus $\exp_0(H)<\infty$.
\end{proof}

When $H$ has the \textit{dual Chevalley property} (which means that its coradical $H_0$ is a Hopf subalgebra), we could discuss the finiteness of $\exp_0(H)$ more specifically. The notion of the \textit{coradical filtration} (cf. \cite[Section 9.1]{Swe69}) for $H$ would be mentioned, which is always denoted by $\{H_n\}_{n\geq 0}$ in this paper. We also recall that the \textit{Loewy length} (cf. \cite[Lemma 2.2]{Iov09}) of a finite-dimensional coalgebra $H$ is defined as
$$\mathrm{Lw}(H):=\min\{l\geq 1\mid H_{l-1}=H\}<\infty.$$

\begin{proposition}\label{prop:exp0}
Let $H$ be a finite-dimensional Hopf algebra with the dual Chevalley property over $\k$. Then
\begin{itemize}
  \item[(1)] If $H$ is non-cosemisimple and $\mathrm{char}~\k=0$, then $\exp_0(H)=\infty$;
  \item[(2)] If $\mathrm{char}~\k=p>0$, and denote $N:=\exp_0(H_0)<\infty$ and $L:=\mathrm{Lw}(H)$, then $\exp_0(H)\mid Np^M$, where $M$ is a positive integer satisfying $p^M\geq L$.
\end{itemize}
\end{proposition}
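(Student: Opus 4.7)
The plan is to handle parts (1) and (2) separately. Part (1) coincides with a theorem already proved in \cite{LZ19} (cited in the introduction), so I would just invoke that result rather than re-derive it.

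For part (2), I would work entirely inside the convolution algebra $(\End_\k(H),*)$ whose unit is $e := u\circ\varepsilon$, and set $\beta := [N] - e$. Since convolution is associative, $[Nm] = \id^{*Nm} = ([N])^{*m}$ for every $m \geq 1$. Choosing $m = p^M$ and using the characteristic-$p$ vanishing $\binom{p^M}{j} \equiv 0 \pmod p$ for $1 \leq j \leq p^M - 1$, the convolution-binomial expansion collapses to
$$[Np^M] \;=\; (e+\beta)^{*p^M} \;=\; e + \beta^{*p^M}.$$
The problem thus reduces to showing that $\beta^{*p^M}(H) = 0$ whenever $p^M \geq L$.

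The key lemma I would prove is: $\beta^{*k}(H_n) = 0$ whenever $k > n$. Two ingredients go into this. First, $\beta|_{H_0} = 0$, because $H_0$ is a Hopf subalgebra, so $[N]|_{H_0}$ coincides with the $N$-th Sweedler power on $H_0$, which equals $u\circ\varepsilon$ there by the very definition of $N=\exp_0(H_0)$. Second, the coradical filtration is always a coalgebra filtration, so $\Delta_k(H_n)\subset\sum_{i_1+\cdots+i_k=n}H_{i_1}\otimes\cdots\otimes H_{i_k}$. Writing $\beta^{*k} = m_k\circ\beta^{\otimes k}\circ\Delta_k$ and combining these two facts, any surviving summand of $\beta^{\otimes k}\Delta_k(h)$ must have every tensor slot landing outside $H_0$, forcing each $i_j\geq 1$ and hence $\sum_j i_j \geq k$; this is incompatible with $\sum_j i_j \leq n < k$, so every summand vanishes.

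Taking $k=p^M\geq L$ and $h\in H=H_{L-1}$, the lemma gives $\beta^{*p^M}=0$ on $H$, hence $[Np^M]=e$, which by Lemma~\ref{lem:exp0basic}(1) yields $\exp_0(H)\mid Np^M$. The finiteness $N=\exp_0(H_0)<\infty$ required for the statement to make sense is itself supplied by Proposition~\ref{26} applied to the finite-dimensional Hopf algebra $H_0$. The conceptual heart of the argument — and the step that genuinely distinguishes positive characteristic from characteristic zero — is the Frobenius-like binomial collapse $(e+\beta)^{*p^M} = e + \beta^{*p^M}$; the filtration bookkeeping for the $\beta^{*k}$ lemma is routine once one notes that $H_0\subset H$ is a Hopf subalgebra. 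The absence of such a collapse in characteristic zero is consistent with, and indeed motivates, part (1).
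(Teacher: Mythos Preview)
Your proposal is correct and follows essentially the same route as the paper: part (1) is cited from \cite{LZ19}, and part (2) hinges on the convolution-nilpotency $(\id^{*N}-u\circ\varepsilon)^{*L}=0$ combined with the characteristic-$p$ binomial collapse. The only difference is packaging: the paper invokes \cite[Lemma 4.11]{Shi15} for the nilpotency and leaves the Frobenius step implicit, whereas you unpack both---your coradical-filtration argument in fact reproves Shimizu's lemma (and would yield the sharper $\beta^{*L}=0$ if you took $k=L$ rather than $k=p^M$).
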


\begin{proof}
\begin{itemize}
  \item[(1)] This is \protect{\cite[Theorem 4.1]{LZ19}}.
  \item[(2)] Note that $N<\infty$ holds according to Proposition \ref{26}. The divisibility is followed directly from \protect{\cite[Lemma 4.11]{Shi15}}, which states that
      $$(id^{\ast N}-u\circ\varepsilon)^{\ast L}=0,$$
      where $\ast$ denotes the convolution in $\End_\k(H)$.
\end{itemize}
\end{proof}

\begin{remark}
Item (2) is generalizes \protect{\cite[Theorem 5.1]{LZ19}}, which holds when $H$ is pointed in positive characteristic. The author apologizes that the upper bound in \protect{\cite[Theorem 5.1]{LZ19}} is written incorrect, and the right version would be the same as Proposition \ref{prop:exp0}(2) here.
\end{remark}

\subsection{Primitive Matrices over Hopf Algebras}

In this subsection, we introduce the definitions and particular properties of so-called multiplicative and primitive matrices. They could be helpful when we deal with problems on exponents of non-pointed Hopf algebras. Related concepts are gathered in the following definition.

\begin{definition}(cf. \cite[Section 2.6]{Man88} and \cite[Section 3]{LZ19})
Let $(H,\Delta,\varepsilon)$ be a coalgebra.
\begin{itemize}
  \item[(1)] A matrix $\C=(c_{ij})_{r\times r}$ over $H$ is said to be multiplicative, if
      $$\Delta(c_{ij})=\sum\limits_{k=1}^r c_{ik}\otimes c_{kj}\;\;\;\;\text{and}\;\;\;\;
        \varepsilon(c_{ij})=\delta_{ij}$$
      hold for each $1\leq i,j\leq r$;
  \item[(2)] A multiplicative matrix $\C=(c_{ij})_{r\times r}$ is said to be basic, if $\{c_{ij}\mid 1\leq i,j\leq r\}$ is linearly independent;
  \item[(3)] Suppose that $\C=(c_{ij})_{r\times r}$ and $\mathcal{D}=(d_{ij})_{s\times s}$ are basic multiplicative matrices over $H$. A matrix $\X=(x_{ij})_{r\times s}$ over $H$ is said to be $(\C,\mathcal{D})$-primitive, if
      $$\Delta(x_{ij})=\sum\limits_{k=1}^r c_{ik}\otimes x_{kj}+\sum\limits_{l=1}^s x_{il}\otimes d_{lj}$$
      holds for each $1\leq i\leq r$ and $1\leq j\leq s$.
  \item[(4)] A primitive matrix $\X=(x_{ij})_{r\times s}$ over $H$ is said to be non-trivial, if some entry $x_{ij}$ does not belong to $H_0$.
\end{itemize}
\end{definition}

\begin{remark}
Suppose $\X$ is a $(\C,\mathcal{D})$-primitive matrix, it is immediate that for each entry $x$ in $\X$, we must have $\Delta(x)\in C\otimes H_1+H_1\otimes D$, where $C$ and $D$ are simple subcoalgebras with basic multiplicative matrices $\C$ and $\mathcal{D}$ respectively.
\end{remark}

Let $H$ be a Hopf algebra over $\k$ for the remaining of this paper. Before we discuss further properties and applications, some notations on matrices over $H$ should be given.

\begin{notation}
Let $\mathcal{A}=(a_{ij})_{r\times s}$ and $\mathcal{B}=(b_{ij})_{r'\times s'}$ be matrices over $H$. We give following notations:
\begin{itemize}
  \item[(1)] $\mathcal{A}^\mathrm{T}:=(a_{ji})_{s\times r}$ is the transpose of $\mathcal{A}$;
  \item[(2)] $\mathcal{A}+\mathcal{B}:=(a_{ij}+b_{ij})_{r\times s}$ when $r=r'$ and $s=s'$;
  \item[(3)] $h\mathcal{A}:=(ha_{ij})_{r\times s}$ and $\mathcal{A}h:=(a_{ij}h)_{r\times s}$ for any $h\in H$ (or $h\in \k$);
  \item[(4)] $\mathcal{A}\mathcal{B}:=\left(\sum\limits_{k=1}^s a_{ik}b_{kj}\right)_{r\times s'}$ when $s=r'$;
  \item[(5)] $f(\mathcal{A}):=(f(a_{ij}))_{r\times s}$ for any $\k$-linear map $f$ from $H$.
\end{itemize}
\end{notation}

As for later uses, we focus on existence and operation properties for certain non-trivial primitive matrices over $H$, especially when $H$ is non-cosemisimple with the dual Chevalley property. The set of all the simple subcoalgebras of $H$ is denoted by $\mathcal{S}$ for convenience.

\begin{lemma}\label{lem:X}
Let $H$ be a finite-dimensional non-cosemisimple Hopf algebra. Then:
\begin{itemize}
  \item[(1)] There exists a non-trivial $(\C,1)$-primitive matrix for some $C\in\S$ with a basic multiplicative matrix $\C$;
  \item[(2)] If $H_0$ has the dual Chevalley property, and suppose $\Lambda_0\in 1+\sum\limits_{D\in\S\setminus\{\k1\}}D$, then for any $C\in\S$ and non-trivial $(\C,1)$-primitive matrix $\X$, we have
      $$\Lambda_0\X\neq 0\;\;\;\;\text{and}\;\;\;\;\X\Lambda_0\neq0.$$
\end{itemize}
\end{lemma}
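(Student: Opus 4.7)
For part (1), I plan to leverage the fact that, since $H$ is finite-dimensional and non-cosemisimple, the inclusion $H_0\subsetneq H_1$ is proper, so the quotient $H_1/H_0$ is a nonzero $(H_0,H_0)$-bicomodule over the cosemisimple coalgebra $H_0$. Decomposing $H_1/H_0$ into simple sub-bicomodules and lifting one such summand back to $H_1$ yields a non-trivial $(\C_0,\D_0)$-primitive matrix $\mathcal{Y}=(y_{ij})$ for some pair of simple subcoalgebras with basic multiplicative matrices $\C_0=(c_{ij})$ and $\D_0=(d_{ij})$. To reduce the right-hand side to $(1)$, I multiply on the right by $\D_0^\vee:=(S(d_{ij}))$: a direct calculation using the antipode identity $\sum_k d_{lk}S(d_{kp})=\delta_{lp}$ gives
$$\Delta\bigl((\mathcal{Y}\D_0^\vee)_{ij}\bigr)=\sum_{m,p}\bigl(c_{im}S(d_{pj})\bigr)\otimes(\mathcal{Y}\D_0^\vee)_{mp}+(\mathcal{Y}\D_0^\vee)_{ij}\otimes 1,$$
so $\X:=\mathcal{Y}\D_0^\vee$ is $(\mathcal{E},(1))$-primitive for the (possibly non-basic) multiplicative matrix $\mathcal{E}$ with entries $\mathcal{E}_{(i,j),(m,p)}=c_{im}S(d_{pj})$, all lying in the subcoalgebra $C_0\cdot S(D_0)\subseteq H_0$. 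Non-triviality of $\X$ follows from $\mathcal{Y}=\X\D_0$, and decomposing $C_0\cdot S(D_0)$ into simple summands block-decomposes $\mathcal{E}$ (and hence $\X$) after a change of basis into basic $(\C,(1))$-primitive blocks, at least one of which remains non-trivial.

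For part (2), under the dual Chevalley assumption on $H$, the coradical $H_0$ is a Hopf subalgebra and so $\gr H=\bigoplus_n H_n/H_{n-1}$ is a graded Hopf algebra. Writing $\Lambda_0=1+\lambda$ with $\lambda\in\sum_{D\in\S\setminus\{\k 1\}}D$, the essential structural fact is that since each $D$ is a subcoalgebra with $\Delta(D)\subseteq D\otimes D$, we have $\Delta(\lambda)\in\bigoplus_{D\neq\k 1}(D\otimes D)$; both tensor factors of $\Delta(\lambda)$ therefore lie in $\sum_{D\neq\k 1}D$, which is complementary to $\k 1$ in the decomposition $H_0=\k 1\oplus\sum_{D\neq\k 1}D$. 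Arguing by contradiction, suppose $\Lambda_0\X=0$; then $\Lambda_0 x_i=0$ in $H$ for every entry $x_i$ of $\X$, and by non-triviality we may choose $x_{i_0}\notin H_0$ so that $\bar x_{i_0}\neq 0$ in $\gr_1 H$. Now $\Lambda_0\cdot\bar x_{i_0}=0$ in $\gr H$, so applying the coproduct yields $\Delta(\Lambda_0)\,\Delta(\bar x_{i_0})=0$ in $\gr H\otimes\gr H$. Expanding $\Delta(\bar x_{i_0})=\sum_k c_{i_0 k}\otimes\bar x_k+\bar x_{i_0}\otimes 1$ and $\Delta(\Lambda_0)=1\otimes 1+\Delta(\lambda)$, and restricting to the bidegree-$(1,0)$ component in $\gr_1 H\otimes H_0$, one obtains
$$\bar x_{i_0}\otimes 1+\sum(\lambda_{(1)}\cdot\bar x_{i_0})\otimes\lambda_{(2)}=0.$$
The first summand lies in $\gr_1 H\otimes\k 1$ while the second lies in $\gr_1 H\otimes\sum_{D\neq\k 1}D$; these are complementary direct summands of $\gr_1 H\otimes H_0$, so each must vanish separately, forcing $\bar x_{i_0}=0$, a contradiction. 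The inequality $\X\Lambda_0\neq 0$ follows by the symmetric argument with right multiplication, again using that $\lambda_{(2)}\in\sum_{D\neq\k 1}D$.

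The main technical point I anticipate is in part (1): verifying that the block-decomposition of $\mathcal{E}$ indeed produces a legitimate basic multiplicative matrix together with a non-trivial primitive sub-matrix requires carefully tracking bases of the simple $H_0$-comodules through the change of basis. Once the graded Hopf algebra structure is in place for part (2), the whole argument distils to the clean bidegree bookkeeping above, which hinges on the decisive fact that $\Delta(\lambda)$ has no summand with either tensor factor in $\k 1$.
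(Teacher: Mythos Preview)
Your argument for part~(2) is correct and, at its core, coincides with the paper's own proof. The paper does not pass to $\gr H$ but works directly in $H$: it picks a linear functional $e:H\to\k$ with $e(1)=1$ and $e(D)=0$ for all $D\in\S\setminus\{\k1\}$, and computes that $(\id\otimes e)\circ\Delta(\Lambda_0 x)\in x+H_0$, forcing $\Lambda_0 x\neq 0$. Your bidegree-$(1,0)$ projection in $\gr H\otimes\gr H$ followed by the splitting $H_0=\k1\oplus\sum_{D\neq\k1}D$ in the second tensor factor is precisely the application of $\id\otimes e$ in disguise; both arguments pivot on the single observation that $\Delta(\lambda)$ has no tensor factor in $\k1$.

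For part~(1) there is a genuine gap. The lemma as stated does \emph{not} assume the dual Chevalley property, yet your reduction step relies on the inclusion $C_0\cdot S(D_0)\subseteq H_0$, which is exactly the assertion that $H_0$ is closed under products (and $S$). Without this, the entries of your multiplicative matrix $\mathcal{E}$ need not span a cosemisimple subcoalgebra, and the block decomposition into basic multiplicative matrices over simple subcoalgebras may fail. The paper sidesteps this by citing \cite[Proposition~4.3 and Theorem~3.1]{LZ19}. One clean repair that avoids any hypothesis on $H_0$ is to argue on the comodule side: take the injective hull of the trivial right $H$-comodule $\k$, use non-cosemisimplicity to extract a non-split length-two subcomodule with socle $\k$ and simple top $V$, and read off from its coaction a non-trivial $(\C,1)$-primitive matrix where $\C$ is a basic multiplicative matrix for the simple subcoalgebra corresponding to $V$. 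If you prefer to keep your present argument, you should add the dual Chevalley hypothesis to part~(1) as well; this is harmless for the paper's applications, since the lemma is only invoked under that assumption anyway.
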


\begin{proof}
\begin{itemize}
\item[(1)]
This is a conclusion of \cite[Proposition 4.3]{LZ19} and \cite[Theorem 3.1]{LZ19}.
\item[(2)]
The dual Chevalley property of $H$ implies that $H_0H_1+H_1H_0\subseteq H_1$ (e.g. \cite[Lemma 5.2.8]{Mon93}). Let $x$ be an entry of $\X$ satisfying $x\notin H_0$. Since $\Delta(x)\in C\otimes H_1+H_1\otimes1$, it is evident that for any $D\in\S\setminus\{\k1\}$,
\begin{eqnarray*}
\Delta(Dx)~\subseteq~ DC\otimes DH_1+DH_1\otimes D
          ~\subseteq~ H_0\otimes H_1+H_1\otimes D,  \\
\Delta(xD)~\subseteq~ CD\otimes H_1D+H_1D\otimes D
          ~\subseteq~ H_0\otimes H_1+H_1\otimes D,
\end{eqnarray*}
both hold.

Now we choose a linear function $e:H\rightarrow\k$ such that
$$e(1)=1\;\;\;\;\text{while}\;\;\;\;e(D)=0\;\;(\forall D\in\S\setminus\{\k1\}).$$
It is true that $(\id\otimes e)\circ\Delta(x)\in Ce(H_1)+xe(1)\subseteq H_0+x$, because $\X$ is $(\C,1)$-primitive. Further computations show that
\begin{eqnarray*}
(\id\otimes e)\circ\Delta(\Lambda_0x)
&\in& (\id\otimes e)\circ\Delta(x+\sum\limits_{D\in\mathcal{S}\setminus\{\k1\}}Dx)  \\
&=& (\id\otimes e)\circ\Delta(x)
    +\sum\limits_{D\in\mathcal{S}\setminus\{\k1\}}(\id\otimes e)\circ\Delta(Dx)  \\
&\subseteq& (H_0+x)+H_0
~=~ x+H_0,
\end{eqnarray*}
and similarly $(\id\otimes e)\circ\Delta(x\Lambda_0)\in x+H_0$. However $x\notin H_0$, which follows that $\Lambda_0x$ and $x\Lambda_0$ are non-zero, then so are $\Lambda_0\X$ and $\X\Lambda_0$.
\end{itemize}
\end{proof}

The lemma above are actually enough for us to estimate $\exp_0(H)$ for a non-cosemisimple Hopf algebra $H$. However if we try to compute $\exp(H)$, we need to know how $S^2$ acts on primitive matrices. We provide follwing conclusions for this purpose.

\begin{lemma}(\cite[Proposition 3.7]{LL??})\label{lem:S2N}
Suppose $\k$ is algebraically closed. Let $H$ be a finite-dimensional Hopf algebra over $\k$ with the dual Chevalley property. Denote $N:=\exp(H_0)<\infty$. Then for any basic multiplicative matrix $\C$ and $(\C,1)$-primitive matrix $\X$, we have $S^{2N}(\X)=\X$.
\end{lemma}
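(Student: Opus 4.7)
The plan is to argue in two steps. First, the hypothesis $N=\exp(H_0)$ forces $S^{2N}$ to act as the identity on $H_0$; then the primitivity of $\X$ together with the antipode axioms and the periodicity of $\C$ will propagate this identity from $H_0$ to $\X$.

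For the first step, the identity defining $\exp(H_0)$ applied to the entries $c_{ij}$ of $\C$ gives the matrix equation $\C\cdot S^{-2}(\C)\cdots S^{-2N+2}(\C)=I$ in $M_r(H_0)$, while applying it to $S^{-2}(c_{ij})\in H_0$ gives $S^{-2}(\C)\cdot S^{-4}(\C)\cdots S^{-2N}(\C)=I$. Since each $S^{-2k}(\C)$ is invertible in $M_r(H_0)$ with inverse $S^{-2k+1}(\C)$, the common middle factor $S^{-2}(\C)\cdots S^{-2N+2}(\C)$ is invertible, so comparing the two identities forces $\C=S^{-2N}(\C)$. Because every simple subcoalgebra of $H_0$ is $\k$-spanned by the entries of a basic multiplicative matrix, this gives $S^{2N}|_{H_0}=\id$.

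For the second step, observe that since $S^{2N}$ is a Hopf-algebra automorphism of $H$ fixing $H_0$ pointwise, applying $S^{2N}\otimes S^{2N}$ to the primitivity relation shows $S^{2N}(\X)$ is again $(\C,1)$-primitive. Now the antipode axiom $(\id\ast S)(x_{ij})=0$ gives the matrix identity $\C\cdot S(\X)=-\X$, which under $S^{2k}$-translation becomes $S^{2k+1}(\X)=-S^{2k+1}(\C)\cdot S^{2k}(\X)$ using $S^{2k}(\C)^{-1}=S^{2k+1}(\C)$; applying $S$ once more and invoking its anti-multiplicativity yields $S^{2k+2}(x_i)=-\sum_j S^{2k+1}(x_j)\,S^{2k+2}(c_{ij})$. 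Iterating these two half-steps $N$ times embeds $\X$ into a double-indexed sandwich of factors $S^{2j+1}(\C)$ on the left and $S^{2j+2}(\C)$ on the right; adjacent pairs telescope via $S^{2k+1}(\C)\cdot S^{2k}(\C)=I$, and after $N$ iterations the outermost layer closes up thanks to $S^{2N}(\C)=\C$ from Step~1, forcing $S^{2N}(\X)=\X$.

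The main obstacle is that, because $H$ is noncommutative, the entries of $\X$ do not commute with entries of $\C$, so the telescoping of the sandwich does not collapse to a single matrix identity and must be verified using the auxiliary identity $\sum_j S^{2k+1}(c_{jm})\,S^{2k+2}(c_{ij})=\delta_{im}$ (obtained by applying $S^{2k+1}$ to $S(\C)\cdot\C=I$). A cleaner alternative is to work inside the pivotal extension $\widetilde H=H\rtimes\k\langle S^2\rangle$, in which $S^2$ is implemented by conjugation with the pivotal element $g:=1\rtimes S^2$: then $S^{2N}(\X)=g^N\,\X\,g^{-N}$, Step~1 says $g^N$ commutes with every element of $H_0$, and the remaining task---that $g^N$ also commutes with the entries of $\X$---can be attacked via the pivotal Sweedler-power formula of Lemma~\ref{lem:pivotpow} together with Corollary~\ref{27}.
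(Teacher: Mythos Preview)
First, note that the paper does not itself prove this lemma; it is quoted verbatim from \cite[Proposition 3.7]{LL??}, so there is no in-paper argument to compare against. What remains is to assess whether your outline stands on its own.

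Your Step~1 is fine: the matrix identities $\C S^{-2}(\C)\cdots S^{-2N+2}(\C)=I$ and $S^{-2}(\C)\cdots S^{-2N}(\C)=I$, combined with invertibility of the shared middle block, do force $S^{2N}(\C)=\C$, and hence $S^{2N}|_{H_0}=\id$ since the simple subcoalgebras are spanned by such matrices.

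Step~2, however, is not completed, and neither of the two routes you sketch actually closes the gap. For the direct route, write out what your recursion gives:
\[
S^{2k+2}(x_i)=\sum_{j,m} S^{2k+1}(c_{jm})\,S^{2k}(x_m)\,S^{2k+2}(c_{ij}).
\]
Your auxiliary identity $\sum_j S^{2k+1}(c_{jm})S^{2k+2}(c_{ij})=\delta_{im}$ is correct, but it cannot be applied here because the factor $S^{2k}(x_m)$ sits \emph{between} the two $c$-factors you want to contract, and there is no reason for it to commute past either of them. Iterating $N$ times only makes this worse: one obtains $x_{m_N}$ sandwiched between a product of odd-power antipode images of $\C$ on the left and even-power images on the right, with no adjacent pair to which the identity applies. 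Knowing $S^{2N}(\C)=\C$ does not by itself collapse this sandwich; something further about how $S^2$ interacts with $H_1$ (beyond its restriction to $H_0$) is needed.

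Your ``cleaner alternative'' via $\widetilde H=H\rtimes\k\langle S^2\rangle$ is circular. In $\widetilde H$ one has $g(x_i\rtimes\id)g^{-1}=S^2(x_i)\rtimes\id$, so the claim ``$g^N$ commutes with the entries of $\X$'' is literally the statement $S^{2N}(x_i)=x_i$ you are trying to prove. Neither Lemma~\ref{lem:pivotpow} nor Corollary~\ref{27} helps here: Corollary~\ref{27} computes $\exp(\widetilde H)$ in terms of $\exp(H)$ and $\exp_0(H)$, not in terms of $N=\exp(H_0)$, and the order of $g$ in $\widetilde H$ is $\ord(S^2)$ on all of $H$, which a~priori exceeds $\ord(S^2|_{H_0})$. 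So this reformulation gains nothing.

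In short, you have correctly reduced the problem to controlling $S^{2N}$ on $H_1\setminus H_0$, but the argument that this control follows from $S^{2N}|_{H_0}=\id$ alone is missing, and the tools you invoke do not supply it.
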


\begin{remark}
The fact that $\exp(H_0)<\infty$ is due to a discussion on the characteristic of $\k$ as well as the fact that $H_0$ is a semisimple Hopf algebra. See \cite[Corollary 2.4]{LL??} for example.
\end{remark}

\begin{corollary}\label{cor:S2X}
Let $\k$ be an algebraically closed field of characteristic $0$. Suppose $H$ is a finite-dimensional non-cosemisimple Hopf algebra with the dual Chevalley property over $\k$. Denote $N:=\exp(H_0)$. Then there exists a non-trivial $(\C,1)$-primitive matrix $\X$ for some basic multiplicative matrix $\C$, such that
$$S^2(\X)=q\X,$$
where $q\in\k$ is a $N$th root of unity.
\end{corollary}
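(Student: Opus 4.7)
The plan is to find a basic multiplicative matrix $\C$ admitting a non-trivial $(\C,1)$-primitive matrix, and then to exhibit an $S^2$-eigenvector inside the finite-dimensional vector space $V_\C$ of all $(\C,1)$-primitive matrices. The main ingredients are the existence assertion of Lemma \ref{lem:X}(1), the invariance $S^{2N}(\X)=\X$ from Lemma \ref{lem:S2N}, and the fact that a finite-order linear operator over a field of characteristic zero is diagonalizable.

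First I would observe that the hypotheses force $S^2|_{H_0}=\id$. Indeed, the dual Chevalley property makes $H_0$ a Hopf subalgebra, and as a coalgebra $H_0$ is always cosemisimple, hence a cosemisimple Hopf algebra; in characteristic zero this is equivalent to being semisimple, and by Larson--Radford every finite-dimensional semisimple Hopf algebra in characteristic zero is involutory. Consequently, every basic multiplicative matrix $\C$ (whose entries lie in $H_0$) satisfies $S^2(\C)=\C$.

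Now by Lemma \ref{lem:X}(1), fix some $C\in\S$ and a basic multiplicative matrix $\C$ of $C$ for which some non-trivial $(\C,1)$-primitive matrix exists. Let $V_\C$ denote the $\k$-vector space of all $(\C,1)$-primitive matrices (of the appropriate size), which is finite-dimensional since $H$ is. Applying $S^2$ entrywise is then a well-defined endomorphism of $V_\C$: because $S^2$ is a coalgebra homomorphism and $S^2(\C)=\C$, the matrix $S^2(\X)$ remains $(\C,1)$-primitive whenever $\X$ is. Lemma \ref{lem:S2N} yields $(S^2)^N=\id$ on $V_\C$, so $S^2|_{V_\C}$ is diagonalizable with eigenvalues among the $N$th roots of unity.

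The remaining step is to ensure the chosen eigenvector is non-trivial. Let $V_\C^{\mathrm{triv}}\subseteq V_\C$ be the subspace of those matrices whose entries all lie in $H_0$; this subspace is $S^2$-invariant because $S^2(H_0)=H_0$, and it is proper in $V_\C$ by the non-trivial existence from Lemma \ref{lem:X}(1). Choose an $S^2$-invariant complement $W$ of $V_\C^{\mathrm{triv}}$ (available by semisimplicity of the finite-order action in characteristic zero); then $W\neq 0$ and any eigenvector $\X\in W$ is automatically non-trivial and satisfies $S^2(\X)=q\X$ for some $N$th root of unity $q$. I expect this last manoeuvre to be the most delicate point: an arbitrary eigenvector of $S^2|_{V_\C}$ might lie in $V_\C^{\mathrm{triv}}$, so extracting a non-trivial one genuinely requires the invariant complement argument (equivalently, diagonalizing the induced action on the nonzero quotient $V_\C/V_\C^{\mathrm{triv}}$ and lifting).
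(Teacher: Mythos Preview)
Your proposal is correct and follows essentially the same route as the paper: both fix $\C$ via Lemma~\ref{lem:X}(1), use involutoriness of $H_0$ to see that $S^2$ preserves the space $P=V_\C$ of $(\C,1)$-primitive matrices, invoke Lemma~\ref{lem:S2N} to get a $\Z_N$-action, and then diagonalize. The only difference is in the non-triviality step: the paper argues directly that if every one-dimensional summand had a trivial basis vector then $P$ would consist entirely of trivial matrices (since triviality is a linear condition), contradicting Lemma~\ref{lem:X}(1); your invariant-complement argument for $V_\C^{\mathrm{triv}}\subsetneq V_\C$ is an equivalent and slightly more explicit way of saying the same thing.
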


\begin{proof}
We know by Lemma \ref{lem:X}(1) that there do exist non-trivial $(\C,1)$-primitive matrices for some basic multiplicative matrix $\C$. Let $P\neq 0$ be the finite-dimensional space of all the $(\C,1)$-primitive matrices over $H$.

Note that the dual Chevalley property ensures that $H_0$ is involutory in characteristic $0$, and thus $P$ is stable under $S^2$. Now we consider the representation of the cyclic group $\Z_N$ on $P$ defined by
$$n:\mathcal{W}\mapsto S^{2n}(\mathcal{W})\;\;\;\;(n\in\Z_N,\;\mathcal{W}\in P),$$
which is well defined due to Lemma \ref{lem:S2N}.

Clearly this is a direct sum of $1$-dimensional representations. We claim that one of these subrepresentations must have non-trivial basis $\X$, otherwise there would be no non-trivial $(\C,1)$-primitive matrices in $P$, a contradiction. Of course the subrepresentation $\k\X$ provides that $S^2(\X)=q\X$, where $q$ is an $N$th root of unity.
\end{proof}

\subsection{Finiteness with the Dual Chevalley Property}

The finiteness of $\exp(H)$ is discussed at the end of this paper. When $H$ is non-semisimple in characteristic $0$, it is estimated that $\exp(H)$ is usually (and probably always) infinite in \cite[Section 1]{EG02}. We show that this property is true as long as $H$ has the dual Chevalley property.

\begin{theorem}
Let $H$ be a finite-dimensional Hopf algebra with the dual Chevalley property over $\k$. Then
\begin{itemize}
  \item[(1)] If $H$ is non-cosemisimple and $\mathrm{char}~\k=0$, then $\exp(H)=\infty$;
  \item[(2)] If $\mathrm{char}~\Bbbk=p>0$, and denote $N:=\lcm(\exp(H_0),\exp_0(H_0))<\infty$ and $L:=\mathrm{Lw}(H)$, then $\exp(H)\mid Np^M$, where $M$ is a positive integer satisfying $p^M\geq L$.
\end{itemize}
\end{theorem}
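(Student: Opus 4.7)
The proof separates by characteristic. \textbf{Part (2)} comes formally from passing to the pivotal cover. The smash product $H\rtimes\k\langle S^2\rangle$ is pivotal, finite-dimensional, and inherits the dual Chevalley property: since $\k\langle S^2\rangle$ is cosemisimple, the coradical of the coalgebra $H\otimes\k\langle S^2\rangle$ equals $H_0\otimes\k\langle S^2\rangle$, a Hopf subalgebra. The same cosemisimplicity shows $\mathrm{Lw}(H\rtimes\k\langle S^2\rangle)=L$. Applying Proposition \ref{prop:exp0}(2) to $H\rtimes\k\langle S^2\rangle$ yields $\exp_0(H\rtimes\k\langle S^2\rangle)\mid N'p^M$, where $N':=\exp_0(H_0\rtimes\k\langle S^2\rangle)$. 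Since $H_0\rtimes\k\langle S^2\rangle$ is itself pivotal, Corollary \ref{cor:pivotal} and Corollary \ref{27} (the latter applied with $H_0$ in place of $H$) identify $N'=\exp(H_0\rtimes\k\langle S^2\rangle)=\lcm(\exp(H_0),\exp_0(H_0))=N$. A second application of Corollaries \ref{cor:pivotal} and \ref{27}, now for $H$ itself, yields $\lcm(\exp_0(H),\exp(H))=\exp(H\rtimes\k\langle S^2\rangle)=\exp_0(H\rtimes\k\langle S^2\rangle)\mid Np^M$, whence $\exp(H)\mid Np^M$.

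\textbf{Part (1)} requires a direct primitive-matrix argument. After extending scalars (both exponents are unchanged), we may assume $\k$ algebraically closed. By Corollary \ref{cor:S2X}, choose a non-trivial $(\C,1)$-primitive matrix $\X$ with $S^2(\X)=q\X$ for some $N$-th root of unity $q$, where $N:=\exp(H_0)$. Unfolding $W_n$ (in the spirit of the proof of Lemma \ref{lem:QTexp}(2)) one obtains
\begin{equation*}
W_n(\X)=T_n\,\X,\qquad T_n:=\sum_{j=0}^{n-1}q^{-j}\mathcal{A}_{j+1},
\end{equation*}
where $\mathcal{A}_1:=I$ and $\mathcal{A}_r:=\C\cdot S^{-2}(\C)\cdots S^{-2(r-2)}(\C)$ for $r\geq 2$. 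In characteristic $0$ the semisimple coradical $H_0$ is involutory, so $S^{-2}|_{H_0}=\id$ and $\mathcal{A}_{j+1}=\C^j$. Combining $\C^N=I$ (the matrix reformulation of $\exp_0(H_0)\mid N$) with $q^N=1$ yields the periodicity $T_{kN}=k\,T_N$ for every $k\geq 1$.

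The crux is to prove $T_N\X\neq 0$ in $H$. Once established, a hypothetical finite $m:=\exp(H)$ would divide $\lcm(m,N)=kN$ for some $k\geq 1$; then Lemma \ref{lem:expbasic}(1) would force $W_{kN}(\X)=\varepsilon(\X)\cdot 1=0$, hence $k\,T_N\X=0$, contradicting $T_N\X\neq 0$ in characteristic $0$. The plan for the non-vanishing is to imitate the $e$-functional trick from the proof of Lemma \ref{lem:X}(2): taking the linear $e:H\to\k$ with $e(1)=1$ and $e(D)=0$ for $D\in\S\setminus\{\k 1\}$, apply $(\id\otimes e)\circ\Delta$ entry-wise to $T_N\X$ and argue that the $j=0$ contribution---the entries of $\X$ themselves, which lie outside $H_0$---cannot be cancelled by the higher $j$ terms. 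Organized through the associated graded $\gr H$ (whose degree-one part $H_1/H_0$ decomposes into bi-isotypic components indexed by pairs of simple subcoalgebras), the term $\C^j\X$ lies in the component indexed by $(C^{j+1},C^j)$, so only those $j$ with $C^j=\k 1$ project onto the $(C,\k 1)$-component; a short computation using that the order $d'$ of $\C$ divides $N$ and satisfies $q^{d'}=1$ (the latter forced by $S^{2d'}(\X)=\X$) identifies the total contribution as $(N/d')\X\neq 0$ in characteristic $0$. The main obstacle is making the isotypic bookkeeping rigorous for matrix coalgebras of dimension $\geq 4$; the one-dimensional case $\C=(g)$ is transparent, since then $S^2(x)=g^{-1}xg$ directly gives $q^{\ord(g)}=1$.
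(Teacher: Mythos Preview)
Your treatment of Part~(2) is correct and coincides with the paper's: pass to the pivotal cover $H\rtimes\k\langle S^2\rangle$, identify its coradical and Loewy length, apply Proposition~\ref{prop:exp0}(2), and use Corollaries~\ref{cor:pivotal} and~\ref{27} to match the constants.

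For Part~(1), your setup and the formula $W_n(\X)=\bigl(\sum_{j=0}^{n-1}q^{-j}\C^j\bigr)\X$ are fine (since $S^{-2}\!\mid_{H_0}=\id$ in characteristic~$0$), and your periodicity $T_{kN}=kT_N$ is correct. But the nonvanishing step is a genuine gap, and you say so yourself. The isotypic bookkeeping does not go through as written: for a simple subcoalgebra $C$ of dimension $\geq 4$, the matrix power $\C^j$ is only multiplicative, not basic, so ``the component indexed by $(C^{j+1},C^j)$'' is not well-defined, and the projection argument collapses. Your auxiliary claim $q^{d'}=1$ (with $d'$ the order of $\C$) is also unjustified: Lemma~\ref{lem:S2N} gives only $S^{2N}(\X)=\X$, not $S^{2d'}(\X)=\X$.

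The paper avoids all of this with a short integral trick. Working with $\exp_2$ rather than $\exp$ (the formula is the same up to $q\leftrightarrow q^{-1}$), one uses the commutation relation
\[
S(\C)\X \;=\; q\,(\X^{\mathrm T}S(\C)^{\mathrm T})^{\mathrm T},
\]
obtained from $S^2(\X)=q\X$ and the involutory identity $S^2(\C)=\C$. Multiplying on the right by the integral $\Lambda_0$ of $H_0$ (so that $S(\C)^{\mathrm T}\Lambda_0=\Lambda_0$ entrywise), an induction gives $q^{-i}S(\C)^i\X\Lambda_0=\X\Lambda_0$ for every $i\geq 0$. Hence
\[
q^{-(n-1)}S(\C)^{\,n-1}\Bigl(\sum_{i=0}^{n-1}q^i\C^i\X\Bigr)\Lambda_0
\;=\;\sum_{i=0}^{n-1}q^{-i}S(\C)^i\X\Lambda_0
\;=\;n\,\X\Lambda_0\;\neq\;0
\]
for \emph{every} $n\geq 1$, by Lemma~\ref{lem:X}(2). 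This yields $\exp_2(H)=\infty$ uniformly, with no isotypic decomposition and no appeal to $\C^N=I$ or $q^{d'}=1$. The paper then closes via the Drinfeld double: $\exp(H)=\exp(D(H))=\exp_2(D(H))\geq\exp_2(H)=\infty$, using Lemma~\ref{lem:QTexp}(2) for the quasitriangular $D(H)$. (Had you established nonvanishing, your route through $W_n$ would have been marginally more direct, avoiding the final passage through $D(H)$; but the integral argument is the missing idea.)
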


\begin{proof}
\begin{itemize}
\item[(1)]
Without the loss of generality, $\k$ is assumed to be algebraically closed. We begin with the non-cosemisimplicity of $H$. According to Corollary \ref{cor:S2X}, there must be a non-trivial $(\C,1)$-primitive matrix $\X$ for some basic multiplicative matrix $\C$, satisfying $S^2(\X)=q\X\;(0\neq q\in\k)$. On the other hand, the following equations about $\X$ could be obtained by direct computations (see \cite[Lemma 3.5]{LL??}):
\begin{equation}\label{SandS2}
S(\X)=-S(\C)\X\;\;\;\;\text{and}\;\;\;\;q\X=S^2(\X)=((S(\C)\X)^\T S^2(\C)^\T)^\T.
\end{equation}
Note that $H_0$ is involutory, which follows that $S^2(\C)=\C$ as well as $\C^\T S(\C)^\T= S(\C)^\T \C^\T=I$ (the identity matrix over $H$). Thus the latter one within Equation (\ref{SandS2}) could be changed to
\begin{equation}\label{commuting}
S(\C)\X=q(\X^\T S(\C)^\T)^\T.
\end{equation}

Now we focus on showing that
\begin{eqnarray}\label{Xpower2}
m_n\circ(\id\otimes S^2\otimes\cdots\otimes S^{2n-2})\circ\Delta_n(\X)
&=& (I+q\C+\cdots q^{n-1}\C^{n-1})\X  \\
&=& \sum\limits_{i=0}^{n-1}q^i\C^i\X  \nonumber
\end{eqnarray}
is non-zero for any $n\geq 1$, which would imply that $\exp_{2}(H)=\infty$ because of $\varepsilon(\X)=0$. Denote the integral of $H_0$ by $\Lambda_0$ which belongs in $1+\sum\limits_{D\in\S\setminus\{\k1\}}D$, and compute firstly that for each $i\geq 0$,
\begin{eqnarray*}
&& q^{-i}S(\C)^i\X\Lambda_0  \\
&=& q^{-i}S(\C)^{i-1}(S(\C)\X)\Lambda_0
\;\;=\;\; q^{-i}S(\C)^{i-1}(q^{-1}(\X^\T S(\C)^\T)^\T)\Lambda_0  \\
&=& q^{-i+1}S(\C)^{i-1}(\X^\T S(\C)^\T\Lambda_0)^\T \;\;=\;\; q^{-i+1}S(\C)^{i-1}(\X^\T\Lambda_0)^\T  \\
&=& q^{-i+1}S(\C)^{i-1}\X\Lambda_0 \;\;=\;\; \cdots  \\
&=& q^{-1}S(\C)\X\Lambda_0 \;\;=\;\; \X\Lambda_0
\end{eqnarray*}
holds, where the second equality is due to Equation (\ref{commuting}). As a consequence, for any $n\geq1$,
\begin{eqnarray*}
q^{-n+1}S(\C)^{n-1}\left(\sum\limits_{i=0}^{n-1}q^i\C^i\X\right)\Lambda_0
&=& \sum\limits_{i=0}^{n-1}q^{-(n-1-i)}S(\C)^{n-1-i}\X\Lambda_0  \\
&=& \sum\limits_{i=0}^{n-1}q^{-i}S(\C)^i\X\Lambda_0
\;\;=\;\; n\X\Lambda_0 \neq 0
\end{eqnarray*}
according to Lemma \ref{lem:X}(2) and $\mathrm{char}~\k=0$. This clearly follows that (\ref{Xpower2}) must be non-zero for any positive integer $n$, and thus $\exp_2(H)=\infty$.

Finally, it follows from Lemma \ref{lem:QTexp}(2) that
$$\exp(H)=\exp(D(H))=\exp_2(D(H))\geq\exp_2(H)=\infty.$$
Note that the inequality holds since $H$ is a Hopf subalgebra of its Drinfeld double $D(H)$.

\item[(2)] It is known that $H\rtimes\k\langle S^2\rangle$ has the dual Chevalley property and the Lowey length $L$ when $H$ does so. However, its coradical is pivotal and has exponent
    \begin{eqnarray*}
    \exp(H_0\rtimes\k\langle S^2\rangle) &=& \exp_0(H_0\rtimes\k\langle S^2\rangle)  \\
    &=& \lcm(\exp(H_0),\exp_0(H_0))\;\;=\;\; N.
    \end{eqnarray*}
    According to Proposition \ref{prop:exp0}(2), we find that $\exp_0(H\rtimes\k\langle S^2\rangle)$ divides $Np^L$. Consequently
    $$\exp(H)\mid Np^L$$
    holds as well, since $\exp(H)$ divides $\exp(H\rtimes\k\langle S^2\rangle)$ which equals to $\exp_0(H\rtimes\k\langle S^2\rangle)$.
\end{itemize}
\end{proof}

%\begin{remark}
%A positive answer to Kaplansky's fifth conjecture would reduce the condition
%\end{remark}

\section*{Acknowledgement}

The author would like to thank Professor Gongxiang Liu for his useful supports and discussions.

\end{document}